\theoremstyle{definition}
\newtheorem{theorem}{Theorem}[section]
\newtheorem{definition}[theorem]{Definition}
\newtheorem{proposition}[theorem]{Proposition}
\newtheorem{corollary}[theorem]{Corollary}
\newtheorem{remark}[theorem]{Remark}
\newtheorem{lemma}[theorem]{Lemma}
\pgfplotsset{compat=1.18}
\pgfplotsset{%
    layers/standard/.define layer set={%
        background,axis background,axis grid,axis ticks,axis lines,axis tick labels,pre main,main,axis descriptions,axis foreground%
    }{
        grid style={/pgfplots/on layer=axis grid},%
        tick style={/pgfplots/on layer=axis ticks},%
        axis line style={/pgfplots/on layer=axis lines},%
        label style={/pgfplots/on layer=axis descriptions},%
        legend style={/pgfplots/on layer=axis descriptions},%
        title style={/pgfplots/on layer=axis descriptions},%
        colorbar style={/pgfplots/on layer=axis descriptions},%
        ticklabel style={/pgfplots/on layer=axis tick labels},%
        axis background@ style={/pgfplots/on layer=axis background},%
        3d box foreground style={/pgfplots/on layer=axis foreground},%
    },
}
\newcommand{\setto}{\rightrightarrows}
\newcommand{\lparan}{\symbol{40}}
\newcommand{\rparan}{\symbol{41}}
\newcommand{\bbR}{\mathbb{R}}
\newcommand{\bbB}{\mathbb{B}}
\newcommand{\bbN}{\mathbb{N}}
\newcommand{\bbRn}{\mathbb{R}^n}
\newcommand{\bbRm}{\mathbb{R}^m}
\newcommand{\bbRnxn}{\mathbb{R}^{n {\times} n}}
\newcommand{\bbRnxm}{\mathbb{R}^{n {\times} m}}
\newcommand{\bbRmxn}{\mathbb{R}^{m {\times} n}}
\newcommand{\xbar}{\bar{x}}
\newcommand{\ybar}{\bar{y}}
\newcommand*{\diff}{\mathop{}\!\mathrm{d}}
\DeclareMathOperator{\Id}{I}
\DeclareMathOperator{\img}{img}
\DeclareMathOperator{\dist}{dist}
\DeclareMathOperator{\cnvx}{\overline{conv}}
\DeclareMathOperator{\proj}{proj}
\DeclareMathOperator*{\minim}{minimize}
\DeclareMathOperator*{\argmin}{argmin}
\DeclareMathOperator{\tr}{tr}
\DeclareMathOperator{\st}{subject\ to}
\newcommand{\minimize}[3]{\begin{array}{ll}%
  \displaystyle{\minim_{#1}}\quad& #2 \\%
  \st & #3%
  \end{array}}
      \OR\ifentrytype{incollection}\OR\ifentrytype{inproceedings}%
      \OR\ifentrytype{inreference}}
\pgfplotsset{compat=1.18}
\pgfplotsset{%
    layers/standard/.define layer set={%
        background,axis background,axis grid,axis ticks,axis lines,axis tick labels,pre main,main,axis descriptions,axis foreground%
    }{
        grid style={/pgfplots/on layer=axis grid},%
        tick style={/pgfplots/on layer=axis ticks},%
        axis line style={/pgfplots/on layer=axis lines},%
        label style={/pgfplots/on layer=axis descriptions},%
        legend style={/pgfplots/on layer=axis descriptions},%
        title style={/pgfplots/on layer=axis descriptions},%
        colorbar style={/pgfplots/on layer=axis descriptions},%
        ticklabel style={/pgfplots/on layer=axis tick labels},%
        axis background@ style={/pgfplots/on layer=axis background},%
        3d box foreground style={/pgfplots/on layer=axis foreground},%
    },
}
\providecommand{\sep}{, }
\providecommand{\keywords}[1]{\textbf{\textit{Keywords:}} #1}
\author{Titus Pinta}
\title{A Newton-type Method for Non-smooth Under-determined Systems of Equations}
\begin{document}
\maketitle
\begin{abstract}
  We study a variant of Newton's algorithm applied to
  under-determined systems of non-smooth equations. The notion of
  regularity employed in our work is based on Newton differentiability, which
  generalizes semi-smoothness. The classic notion of
  Newton differentiability does not suffice for our purpose, due
  to the existence of multiple zeros and as such we extend it
  to uniform Newton differentiability.
  In this context, we can show that the distance between the iterates
  and the set of zeros of the system decreases super-linearly. For the
  special case of smooth equations, the assumptions of our algorithm are
  simplified.
  Finally, we provide
  some numerical examples to showcase the behavior of our proposed
  method. The key example is a toy model of complementarity constraint
  problems, showing that our method has great application potential
  across engineering fields.
\end{abstract}

\keywords{Newton's Method\sep%
  Under-determined systems\sep%
  Higher Order Methods\sep%
  Nonsmooth Equation
}

\section{Introduction}
In our work, we are interested in solving a general (non-smooth) equation,
\begin{equation*}
  G(x) = 0
\end{equation*}
with $G:U \subseteq \bbRm \to \bbRn$ and $m > n$. By solving such a system, we mean finding a
point $\xbar$ with $G(\xbar)=0$ and not approximating the entire manifold of solutions.
A comprehensive exposition of iterative methods
for solving such problems can be seen in~\cite{OrtRhe00Iter}.

Such non-smooth systems arise often as non-smooth reformulations of
under-determined problems with complementarity constraints. In general, imposing
state space constraints to a physical system yields, after a suitable
discretization, to complementarity constraints and as such our method can
find useful applications in the design of over-parameterized mechanical
devices.

The principal structure of the classic Newton's method can be
adapted to under-determined problems by iteratively solving linear approximations
of the original problem. Such a linear equation yields and affine subspace as
its solution, and the problem that arises consists in picking a
point from this affine subspace. These types of methods have been analyzed
in~\cite{PolTre17Solv,VatBor24Conv}.

This work focuses on a natural choice of such a point, namely the projection
of the current iterate onto the affine subspace. In contrast,~\cite{PolTre17Solv}
works with the projection of 0 onto the affine subspace, i.e.~the element
with minimal norm. When the Jacobian of $G$ has full rank, our method
boils down to the standard Newton's method, with the inverse of the Jacobian
replaced by the Moore-Penrose pseudo-inverse. A similar algorithm has been
successfully applied by Ben-Israel~\cite{Ben66Anew} to solve overdetermined
problems.

The biggest departure of our paper from these previous works stems from
our method's ability to handle non-smooth equations.
In order to handle non-smooth problems, we use the notion of regularity
introduced by Qi in~\cite{Qi_96Cdif}. This notion, originally called
C-differentiability, stems from semi-smoothness~\cite{Qi_Sun93Anon},
the original idea of extending Newton's method to non-smooth problems. In
our work, we have chosen to present this regularity notion under the name
of Newton differentiability, a terminology used also in~\cite{BroUlb22Newt}.

Our contribution is the successful application of the non-smooth Newton-type
techniques, using Newton differentiability, to under-determined systems of
equations, extending the applicability of previously known algorithms.

\subsection{Notation, Definitions and Basic Properties}
We denote the open, and closed, ball at $x$ with radius $r$ by
$\bbB_r(x)$, and $\bbB_r[x]$ respectively. Next, we consider $U$ and $V \subseteq U$ two
non-empty subsets of $\bbRm$.

For reference we define some concepts from linear algebra.
\begin{definition}
  Given a matrix $A \in \bbRnxm$, the kernel of $A$ is
  $\bbRn \supseteq \ker A = \{x \in \bbRn~|~ A x = 0\}$, the image of $A$ is
  $\bbRm \supseteq \img A = \{y \in \bbRm~|~ \exists x \in \bbRn, Ax = y\}$. For a closed and convex
  subset $U \subseteq \bbRn$, the projection is defined as $\proj_U:\bbRn \to U \subseteq \bbRn$,
  $\proj_U x = \argmin_{y \in U}\|x - y\|^2$. A vector $x \in \bbRn$ is orthogonal
  to a vector subspace $U \subseteq \bbRn$, denoted $x \perp U$, if $\forall u \in U$ $\langle x, u \rangle = 0$,
  and two vector subspaces $U$ and $V$ are orthogonal, denoted $U \perp V$ if
  $\forall u \in U$, $u \perp V$.
\end{definition}

\begin{definition}
  Given a differentiable function $G:U \subseteq \bbRm \to \bbRn$
  the matrix $\nabla G(x) \in \bbRnxm$, ${\nabla G(x)}_{ij} = \tfrac{\partial G_i}{\partial x_j},$
  $i \in \{1, \dots, n\}, j \in \{1, \dots, m\}$ is called the {\em Jacobian of $G$ at $x$}.
\end{definition}

\begin{definition}
  A function $G:U \subseteq \bbRm \to \bbRn$ has a {\em directional derivative\/} at $\xbar \in U$
  where $U$ is a neighborhood of $\xbar$ if for all $y \in U$, and $d = y - x$ the
  limit
  \begin{equation*}
    \lim_{t \downarrow 0}\frac{G(x + td) - G(x)}{t} = G'(x; d)
  \end{equation*}
  exists and is finite. A function that has a directional derivative at all
  points $x \in U$ is called {\em Gâteaux differentiable}.
\end{definition}

In the remainder of this section we recall for completeness some of the algebraic
properties of the Moore-Penrose pseudo-inverse. For a thorough exposition of
these ideas see~\cite{Str05Line}.
\begin{definition}[Pseudo-Inverse]
  Let $A \in \bbRnxm$ with full rank. The matrix defined as
  \begin{equation*}
    A^{+} = {(A^T A)}^{-1}A^T,
  \end{equation*}
  if $m \le n$ and as
  \begin{equation*}
    A^{+} = A^T {({A} A^T)}^{-1},
  \end{equation*}
  if $m > n$, is called the {\em pseudo-inverse\/} of $A$.
\end{definition}
\begin{proposition}\label{prop:psudo inverse properties}
  If $A \in \bbRmxn$, then
  \begin{enumerate}
  \item $A {A}^{+} A = A$,
  \item $A^{+} A {A}^{+} = A^{+}$,
  \item $\proj_{\ker A} = \Id - A^{+}A$,
  \item $\proj_{\ker A^T} = \Id - A {A}^{+}$,
  \item $\ker A \perp \img A^T$.
  \end{enumerate}
\end{proposition}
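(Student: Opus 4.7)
The plan is to verify the five items from the explicit closed-form for $A^{+}$. Since the definition splits into two cases ($m \le n$ and $m > n$), I would handle them in parallel; in each case the pseudo-inverse satisfies a one-sided inverse identity that carries most of the weight.

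For items 1 and 2 I would substitute the definition directly. If $m \le n$, then $A^{+} A = (A^T A)^{-1}(A^T A) = \Id$, whence $A A^{+} A = A$ and $A^{+} A A^{+} = A^{+}$ follow at once. If $m > n$, the symmetric computation $A A^{+} = (A A^T)(A A^T)^{-1} = \Id$ delivers the same two identities. Item 5 is a one-line calculation: for $x \in \ker A$ and $y = A^T z \in \img A^T$, one has $\langle x, y \rangle = \langle x, A^T z \rangle = \langle A x, z \rangle = 0$.

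For items 3 and 4 I would invoke the standard characterization that a linear map $P$ on $\bbRm$ is the orthogonal projection onto a subspace $W$ if and only if $P$ is symmetric, idempotent, and has image $W$. Idempotency of $\Id - A^{+} A$ expands as $\Id - 2 A^{+} A + A^{+}(A A^{+} A)$, which collapses to $\Id - A^{+} A$ via item 1. Symmetry reduces to checking $(A^{+} A)^T = A^{+} A$, and this is a direct computation from either closed-form: trivially when $m \le n$ since $A^{+} A = \Id$, and when $m > n$ because $A^{+} A = A^T (A A^T)^{-1} A$ is manifestly symmetric. To identify the image with $\ker A$, I would show both inclusions: if $y = (\Id - A^{+} A) x$ then $A y = A x - A A^{+} A x = 0$ by item 1, so $\img(\Id - A^{+}A) \subseteq \ker A$; conversely if $x \in \ker A$ then $A^{+} A x = 0$, so $(\Id - A^{+} A) x = x$, giving the reverse inclusion. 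Item 4 is the same argument applied to $A^T$, after the easy observation that $(A^T)^{+} = (A^{+})^T$, which again comes out of inspecting the two closed-form expressions.

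The main obstacle is purely bookkeeping: in each of the two cases for $m$ versus $n$, one of items 3 or 4 is essentially vacuous (the kernel being projected onto is $\{0\}$) while the other requires the non-trivial symmetric formula. Beyond tracking which case makes which identity trivial, every step is a routine manipulation, so I would not expect any genuinely hard sub-step.
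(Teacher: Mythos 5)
Your argument is correct. Note that the paper itself gives no proof of this proposition; it is stated as a recollection with a pointer to a linear-algebra reference, so there is no in-paper argument to compare against. Your route is the standard one and every step checks out: the one-sided inverse identities $A^{+}A=\Id$ (tall, full column rank) and $AA^{+}=\Id$ (wide, full row rank) immediately give items 1 and 2; the adjoint computation gives item 5; and the characterization of an orthogonal projector as a symmetric idempotent with prescribed image, combined with the two-sided inclusion $\img(\Id-A^{+}A)=\ker A$, gives item 3. Two small points worth making explicit if you write this out in full. First, the proposition as printed drops the full-rank hypothesis, but the paper's definition of $A^{+}$ only exists for full-rank $A$ (it needs $A^TA$ or $AA^T$ invertible), so you are right to work under that hypothesis throughout. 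Second, in item 4, applying item 3 to $A^T$ together with $(A^T)^{+}=(A^{+})^T$ yields $\proj_{\ker A^T}=\Id-(A^{+})^TA^T=\Id-(AA^{+})^T$, so you still need the symmetry of $AA^{+}$ (trivial when $AA^{+}=\Id$, and manifest from $A(A^TA)^{-1}A^T$ in the other case) to land on $\Id-AA^{+}$; your closed-form inspection covers this, but it deserves a sentence. Your closing observation that in each rank case one of items 3 and 4 degenerates to the projection onto $\{0\}$ is also accurate.
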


\section{Newton Differentiability}
The key notion of regularity for non-smooth problems for our work is that of
Newton differentiability, introduced by Qi in~\cite{Qi_96Cdif}. In this section
we will introduce this important notion and we will extend it to a uniform
version required by the non uniqueness of the zeros of under-determined
systems. We then prove that this notion not only covers the smooth case, but
it is also a natural generalization of the semi-smooth case.
\begin{definition}[Newton Differentiability]%
\label{d:Newton Diff Euclidean}
  A function $G:U \subseteq \bbRm \to \bbRn$ is called
  {\em pointwise weakly Newton differentiable at $\xbar \in U$\/} if there exists a
  set-valued mapping $\mathcal{H}G:U \setto \bbRnxm$ such that
  \begin{equation*}
    \lim_{x \to \xbar}\sup_{H \in \mathcal{H}G(x)}\frac{\|G(x) - G(\xbar) - H(x - \xbar)\|}{\|x - \xbar\|} < \infty.
  \end{equation*}
  Furthermore, if
  \begin{equation*}
    \lim_{x \to \xbar}\sup_{H \in \mathcal{H}G(x)}\frac{\|G(x) - G(\xbar) - H(x - \xbar)\|}{\|x - \xbar\|} = 0,
  \end{equation*}
  the function is called {\em pointwise Newton differentiable at $\xbar$}.
\end{definition}
Analogously, we define uniform Newton differentiability.
\begin{definition}[Uniform Newton Differentiability]%
\label{d:uniform Newton Diff}\label{def:unif-newton-diff}
  A function $G:U \subseteq \bbRm \to \bbRn$ is called
  {\em uniformly weakly Newton differentiable on $V \subseteq U$\/} if there exists a set
  valued mapping $\mathcal{H}G:U \setto \bbRnxm$ and a constant $M$ such that for every $\varepsilon > 0$ there
  exists a $\delta$ such that for all $x \in U$ and all $y \in V$ with $\|x - y\| \le \delta$,
  \begin{equation*}
    \sup_{H \in \mathcal{H}G(x)}\frac{\|G(x) - G(y) - H(x - y)\|}{\|x - y\|} < M + \varepsilon.
  \end{equation*}
  Furthermore, if
  \begin{equation*}
    \sup_{H \in \mathcal{H}G(x)}\frac{\|G(x) - G(y) - H(x - y)\|}{\|x - y\|} < \varepsilon,
  \end{equation*}
  the function is called {\em uniformly Newton differentiable on $V \subseteq U$}.
\end{definition}

\begin{remark}
  The mapping $\mathcal{H}G$ is called a {\em Newton differential of $G$}. Such a mapping
  needs not to be unique.
\end{remark}

\begin{remark}
  Definition~\ref{d:Newton Diff Euclidean} resembles that of Fréchet
  differentiability, with the
  difference stemming from the fact that in Newton differentiability, the
  differential in the limit is evaluated at $x$ and not at $\xbar$.
\end{remark}

\subsection{Examples}
In this subsection we show that the classical regularity tools
employed in the analysis of Newton-type methods can be subsumed by
Newton differentiability.
First, the Fréchet differential of a smooth function can be seen as a Newton
differential.
\begin{proposition}[Newton Differentiability of Smooth Functions]%
  \label{thm:smoothness-implies-newton}
  On an open and convex subset $U \subseteq \bbRm$,
  let $G:U \to \bbRn$, $G \in \mathcal{C}^1$ and $x_0 \in U$
  Then for any $\rho > 0$ with $\bbB_\rho[x_0] \subseteq U$, $G$ is uniformly Newton
  differentiable on $\bbB_\rho[x_0]$ with a Newton differential $\mathcal{H}(x) := \{\nabla G(x)\}$.
\end{proposition}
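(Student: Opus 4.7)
The plan is to use the fundamental theorem of calculus along the segment joining $y$ and $x$ to rewrite $G(x) - G(y)$ as an integral of $\nabla G$, and then to control the error $G(x) - G(y) - \nabla G(x)(x-y)$ by the uniform continuity of $\nabla G$ on a suitable compact neighborhood of $\bbB_\rho[x_0]$.

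First, because $\bbB_\rho[x_0]$ is compact and contained in the open set $U$, I would choose $\delta_0 > 0$ small enough that the enlarged ball $K := \bbB_{\rho + \delta_0}[x_0]$ still lies in $U$. This is where the openness of $U$ is used. Then $K$ is compact and, since $G \in \mathcal{C}^1$, $\nabla G$ is uniformly continuous on $K$. Thus for every $\varepsilon > 0$ I can pick $\delta \in (0, \delta_0]$ so that $\|\nabla G(u) - \nabla G(v)\| < \varepsilon$ whenever $u, v \in K$ with $\|u - v\| \le \delta$.

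Second, for any $y \in \bbB_\rho[x_0]$ and any $x \in U$ with $\|x - y\| \le \delta$, both $x$ and the entire segment $\{y + t(x-y) : t \in [0,1]\}$ lie in $K$. (Note: we use the convexity of $U$, or really just the fact that the segment lies in $K \subseteq U$.) Applying the fundamental theorem of calculus along this segment gives
\begin{equation*}
  G(x) - G(y) = \int_0^1 \nabla G(y + t(x-y))\,(x-y)\,\diff t,
\end{equation*}
so subtracting $\nabla G(x)(x-y) = \int_0^1 \nabla G(x)(x-y)\,\diff t$ and taking norms yields
\begin{equation*}
  \|G(x) - G(y) - \nabla G(x)(x-y)\|
  \le \int_0^1 \|\nabla G(y + t(x-y)) - \nabla G(x)\|\,\diff t \cdot \|x - y\|.
\end{equation*}
Since $\|y + t(x-y) - x\| = (1-t)\|x-y\| \le \delta$ and both points lie in $K$, the integrand is bounded by $\varepsilon$, so the right-hand side is at most $\varepsilon \|x-y\|$. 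Dividing by $\|x-y\|$ and using that $\mathcal{H}G(x) = \{\nabla G(x)\}$ is a singleton (so the supremum is trivial) gives the required bound; to obtain the strict inequality demanded by Definition~\ref{def:unif-newton-diff} I would simply apply the argument with $\varepsilon/2$ in place of $\varepsilon$.

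The only mildly subtle step is the first one, namely producing a compact set $K \subseteq U$ on which $\nabla G$ is uniformly continuous and which also contains, for \emph{every} admissible pair $(x,y)$, the segment joining them; everything else is a routine application of the mean value identity. There is no serious obstacle: convexity of $U$ is not essential here (only the fact that segments from $y$ to $x$ stay in $U$ for small $\|x-y\|$, which already follows from compactness of $\bbB_\rho[x_0]$ and openness of $U$), so the argument is quite robust.
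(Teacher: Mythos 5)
Your proof is correct and follows essentially the same route as the paper's: enlarge the ball to a compact set $K = \bbB_{\rho+\delta_0}[x_0] \subseteq U$ using openness of $U$, invoke uniform continuity of $\nabla G$ on $K$ (Heine--Cantor), and apply the fundamental theorem of calculus along the segment from $y$ to $x$ to bound the remainder by $\varepsilon\|x-y\|$. Your parametrization $y+t(x-y)$ of the segment is in fact the correct one -- the paper's integrand $\nabla G(x+t(x-y))$ is a typo for exactly this -- so no further comment is needed.
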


\begin{proof}
  Let $\rho > 0$ be arbitrary chosen such that $\bbB_\rho[x_0] \subseteq U$. Because $U$ is
  an open set, we know that there is $\delta_0 > 0$ such that $\bbB_{\rho + \delta_0}[x_0] \subseteq U$.
  Because $\nabla G$ is continuous, we can deduce, using the Haine-Cantor theorem
  that is uniformly continuous on $\bbB_{\rho + \delta_0}[x_0]$. Next choose $\varepsilon > 0$ and
  because of the uniform continuity, we know that there exists $\delta > 0$ such that
  for any $x, y \in \bbB_{\rho + \delta_0}[x_0]$ with $\|x - y\| \le \delta$,
  \begin{equation*}
    \|\nabla G(x) - \nabla G(y)\| \le \varepsilon.
  \end{equation*}

  It is clear that if $y \in \bbB_\rho[x_0]$ and $x \in U$ with $\|x - y\| \le \min\{\delta, \delta_0\}$,
  then $x \in \bbB_{\rho + \delta_0}[x_0]$ and for any
  $t \in [0, 1]$ $x, t(x- y) \in \bbB_{\rho + \delta}[x_0]$ and $\|x - t(x - y)\| \le \delta$, so
  \begin{equation}\label{eq:thm-smoothness-implies-newton-bound-step-1}
    \|\nabla G(x + t(x - y)) - \nabla G(x)\| \le \varepsilon.
  \end{equation}

  Because $U$ is convex, we can use the fundamental theorem
  of calculus to compute for any $y \in \bbB_\rho[x_0]$  and for any $x \in U$ with
  $\|x - y\| \le \min\{\delta, \delta_0\}$,
  \begin{align*}
    \|G(x) &- G(y) - \nabla G(x)(x - y)\| \nonumber \\
    &= \left\|\int_{0}^{1}\nabla G(x + t(x - y))(x - y)\diff t - \nabla G(x)(x - y)\right\|\nonumber \\
    &\le \int_{0}^{1}\|\nabla G(x + t(x - y)) - \nabla G(x)\|\|x - y\| \le \frac{\varepsilon}{2}\|x - y\|,
  \end{align*}
  where the last inequality follows
  from~\eqref{eq:thm-smoothness-implies-newton-bound-step-1}.

  The fact that $\rho$ was arbitrary chosen completes the proof.
\end{proof}

Semi-smooth functions have been introduced by Miffin in~\cite{Mif77Semi} and
have been used for Newton-type methods by Qi and Sun~\cite{Qi_Sun93Anon}.
A comprehensive analysis of semi-smooth maps can be found in~\cite{Mov14Nons}.
This idea laid the basis for the development of Newton differentiability,
and as such it should come as no surprise that semi-smoothness fits neatly
into the theory developed in this paper.

We recall the definition of the Clarke generalized Jacobian of a map $G$.
\begin{definition}[Clarke Jacobian]
  Let $G:U \subseteq \bbRm \to \bbRn$ be a Lipschitz continuous function, and $D \subseteq U$ the full
  measure subset (as per Rademacher's theorem)
  where $G$ is differentiable. The set-valued map $\partial^C G:U \setto \bbRnxm$ defined by
  \begin{equation*}
    \partial^C G(x) =
    \cnvx \left \{ H \in \bbRnxm~|~ \exists {\{x^k\}}_{k \in \bbN} \in D, \lim_{k \to \bbN} \nabla G(x^k) = H \right \}
  \end{equation*}
  is called the {\em Clarke Jacobian}.
\end{definition}

With this notion we can recall the definition of a semi-smooth map.
\begin{definition}[Semi-smoothness]
  A Lipschiz continuous function $G:U \subseteq \bbRm \to \bbRn$ is called
  {\em pointwise semi-smooth at $\xbar$\/} if
  for any sequences ${\{t_k\}}_{k \in \bbN} > 0$, ${\{d^k\}}_{k \in \bbN}$ convergent, and
  ${\{H^k\}}_{k \in \bbN} \in \partial^C G(\xbar + t_k d^k)$ the limit $\lim_{k \to \infty} H^k(\lim_{n \to \infty}d_k)$
  exists.
\end{definition}
The next statement, taken from~\cite[Lemma 2.2]{HenOut01Asub} is a
fundamental property of semi-smooth maps.
\begin{proposition}\label{prop:semismooth}
  Let $G:U \subseteq \bbRm \to \bbRn$ be semi-smooth at $\xbar \in U$. Then it is Gâteaux differentiable
  at $\xbar$ and for any $y \in U$ and for any sequence ${\{x^k\}}_{k \in \bbN}$ with
  $\lim_{k \to \infty}x^k = \xbar$, and ${\{H^k\}}_{k \in \bbN} \in \partial^c G(x^k)$,
  \begin{equation*}
    \lim_{k \to \infty}H^k(y - \xbar) = G'(\xbar, y - \xbar).
  \end{equation*}
\end{proposition}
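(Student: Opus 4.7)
The plan proceeds in two steps. Given $y \in U$, let $d := y - \xbar$. I first isolate a candidate limit value $L \in \bbRn$ using the semi-smoothness hypothesis alone, and then identify $L$ with the directional derivative $G'(\xbar; d)$ using absolute continuity along the line $s \mapsto \xbar + sd$.

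Step 1 (defining $L$). Apply the semi-smoothness definition with the constant sequence $d^k \equiv d$ and any $t_k \downarrow 0$. For any selection $H^k \in \partial^C G(\xbar + t_k d)$ the vector $H^k d$ converges to some $L \in \bbRn$, and the standard interleaving trick --- two sequences producing different limits could be merged into a single admissible sequence whose $H^k d$ does not converge --- shows that $L$ depends only on $d$. The same definition, now applied to any convergent $d^k \to d$ and any $H^k \in \partial^C G(\xbar + t_k d^k)$, must yield the same limit $L$. This already delivers the second identity of the proposition as soon as we can parametrize $x^k = \xbar + t_k d^k$ with $d^k \to d$.

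Step 2 (existence of the directional derivative). Lipschitz continuity of $G$ makes the one-variable map $\phi(s) := G(\xbar + sd)$ absolutely continuous on a neighborhood of $0$, so
\begin{equation*}
  \frac{G(\xbar + td) - G(\xbar)}{t} = \frac{1}{t}\int_0^t \phi'(s)\diff s.
\end{equation*}
Using a chain-rule-type inclusion for locally Lipschitz maps, at almost every $s$ where $\phi'(s)$ exists classically one can exhibit $H^s \in \partial^C G(\xbar + sd)$ with $H^s d = \phi'(s)$. I would then prove $\phi'(s) \to L$ as $s \downarrow 0$ by contradiction: if there were $s_k \downarrow 0$ with $\|\phi'(s_k) - L\| \ge \varepsilon$, the sequence $(s_k, d, H^{s_k})$ would violate the conclusion of Step 1. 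Feeding this pointwise convergence back into the integral representation, either by a dominated convergence argument or a direct $\varepsilon$ estimate, yields $G'(\xbar; d) = L$ and completes the argument.

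The main obstacle I expect is the pointwise identification $\phi'(s) = H^s d$ for some $H^s \in \partial^C G(\xbar + sd)$ at a.e.~$s$. This inclusion is not immediate from the convex-hull definition of the Clarke Jacobian; it relies on the fact that the one-dimensional derivative of a Lipschitz function restricted to a line can always be realized as the action of an element of the Clarke Jacobian at the corresponding point. Care is also needed to arrange the representation $x^k = \xbar + t_k d^k$ with $d^k \to d$ for the sequence appearing in the second claim, so that the semi-smoothness definition applies verbatim.
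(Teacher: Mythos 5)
The paper itself offers no proof of this proposition; it is quoted from Henrion--Outrata (their Lemma 2.2) as a known fact, so there is no in-paper argument to compare against. Your two-step plan is the standard route to this kind of result (essentially the Qi--Sun/Shapiro argument): the interleaving trick in Step~1 correctly extracts a single, selection-independent limit $L=L(d)$ from the semi-smoothness definition, and Step~2's identification of $L$ with $G'(\xbar;d)$ via $\phi(t)-\phi(0)=\int_0^t\phi'(s)\diff s$ is sound. The point you flag as the ``main obstacle'' --- realizing $\phi'(s)$ as $H^s d$ with $H^s\in\partial^C G(\xbar+sd)$ for a.e.\ $s$ --- is in fact standard and closes with a citation: Clarke's chain rule applied to the affine curve $s\mapsto\xbar+sd$ gives $\partial^C\phi(s)\subseteq\{Hd~|~H\in\partial^C G(\xbar+sd)\}$, and $\phi'(s)$ belongs to $\partial^C\phi(s)$ wherever the classical derivative exists.

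The genuine gap is the one you defer to your final sentence, and it cannot be repaired. To invoke the semi-smoothness definition for the second claim you must write $x^k=\xbar+t_k d^k$ with $d^k\to y-\xbar$; this forces the normalized increments $(x^k-\xbar)/\|x^k-\xbar\|$ to converge to the direction of $y-\xbar$ (the only other admissible normalization sends $d^k\to 0$ and yields only the vacuous statement $H^k\cdot 0\to 0$). An arbitrary sequence $x^k\to\xbar$ admits no such representation, and indeed the proposition as literally stated is false: take $G(x)=|x|$ on $\bbR$, which is semi-smooth at $0$ under the paper's definition, with $x^k=(-1)^k/k$ and $H^k=\mathrm{sign}(x^k)=(-1)^k$; then $H^k(y-0)=(-1)^k y$ has no limit for $y\neq 0$. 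So your argument proves the correct version of the lemma --- sequences of the form $\xbar+t_k d^k$ with $t_k\downarrow 0$ and $d^k\to y-\xbar$, which is what the cited source actually asserts --- but the final identification with the statement as printed cannot be completed, because that statement does not hold without a restriction on the direction of approach.
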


In order to obtain super-linear convergence for Newton-type methods applied to
standard equation systems, a slightly
stronger condition has to be imposed.
\begin{definition}[Semi-smoothness*]
  A Lipschitz continuous function $G:U \subseteq \bbRm \to \bbRn$ is called
  {\em semi-smooth* at $\xbar \in U$\/} if it is semi-smooth at all $x \in U$
  and
  \begin{equation*}
    \lim_{x \to \xbar}\frac{\|G'(x; x - \xbar) - G'(\xbar; x - \xbar)\|}{\|x - \xbar\|} \le \varepsilon.
  \end{equation*}
\end{definition}

For our work we need to expand this traditional definition to its uniform
analogue.
\begin{definition}[Uniform Semi-smoothness*]
  A Lipschitz continuous function $G:U \subseteq \bbRm \to \bbRn$ is called
  {\em uniformly semi-smooth* on $V \subseteq U$\/} if it is semi-smooth at all $x \in U$
  and
  for every $\varepsilon > 0$ there
  exists a $\delta > 0$ such that for all $x \in U$ and all $y \in V$ with $\|x - y\| \le \delta$,
  \begin{equation*}
    \frac{\|G'(x; x - y) - G'(y; x - y)\|}{\|x - y\|} \le \varepsilon.
  \end{equation*}
\end{definition}

The proof of the next proposition is inspired by Shapiro's work~\cite{Sha90Onco}.
\begin{proposition}[Newton Differentiability of Semi-smooth* Functions]%
  \label{thm:semi-smoothness-implies-newton}
  On an open and convex subset $U \subseteq \bbRm$,
  let $G:U \to \bbRn$ be uniformly semi-smooth* on $V \subseteq U$. Then $G$ is
  uniformly Newton differentiable on $V$ with a Newton differential
  $\mathcal{H}(x) := \{\nabla G(x)\}$.
\end{proposition}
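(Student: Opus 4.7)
The plan is to mimic the integral-based argument from Proposition~\ref{thm:smoothness-implies-newton}, replacing the classical Jacobian by the Gâteaux derivative. Semi-smoothness of $G$ at every $x\in U$ together with Proposition~\ref{prop:semismooth} guarantees that $G$ is Gâteaux differentiable everywhere on $U$ and that each $G'(z;\cdot)$ coincides with the action of any $H\in\partial^C G(z)$; in particular it is linear in its second argument, so that $\nabla G(x)(x-y)$ may be identified with $G'(x;x-y)$. Fix $\varepsilon>0$, and use uniform semi-smoothness* to produce $\delta>0$ such that $\|G'(\xi;\xi-\eta)-G'(\eta;\xi-\eta)\|\le\tfrac{\varepsilon}{2}\|\xi-\eta\|$ for every $\xi\in U$, $\eta\in V$ with $\|\xi-\eta\|\le\delta$.

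For $x\in U$ and $y\in V$ with $\|x-y\|\le\delta$, set $w_t:=y+t(x-y)$; by convexity of $U$, each $w_t$ lies in $U$. Since $G$ is Lipschitz and Gâteaux differentiable on $U$, the absolutely continuous curve $t\mapsto G(w_t)$ satisfies the fundamental theorem of calculus
\begin{equation*}
  G(x)-G(y)=\int_0^1 G'(w_t;x-y)\,\diff t.
\end{equation*}
Subtracting $G'(x;x-y)=\int_0^1 G'(x;x-y)\,\diff t$, inserting $G'(y;x-y)$, and applying the triangle inequality gives
\begin{equation*}
  \|G(x)-G(y)-G'(x;x-y)\|\le \int_0^1\|G'(w_t;x-y)-G'(y;x-y)\|\,\diff t+\|G'(y;x-y)-G'(x;x-y)\|.
\end{equation*}
The last term is at most $\tfrac{\varepsilon}{2}\|x-y\|$ by the hypothesis applied to the pair $(x,y)$. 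For the integrand, the intermediate point $w_t$ need not lie in $V$, so the hypothesis is not directly applicable; the trick is to rescale using linearity of $G'(z;\cdot)$: for $t\in(0,1]$,
\begin{equation*}
  G'(w_t;x-y)-G'(y;x-y)=\tfrac{1}{t}\bigl(G'(w_t;w_t-y)-G'(y;w_t-y)\bigr).
\end{equation*}
The pair $(w_t,y)\in U\times V$ has separation $\|w_t-y\|=t\|x-y\|\le\delta$, so the hypothesis bounds the parenthesis by $\tfrac{\varepsilon}{2}\|w_t-y\|=\tfrac{\varepsilon}{2}t\|x-y\|$, and the factor $\tfrac{1}{t}$ cancels. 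Integrating, the first term contributes $\tfrac{\varepsilon}{2}\|x-y\|$, so altogether $\|G(x)-G(y)-G'(x;x-y)\|\le\varepsilon\|x-y\|$.

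The main obstacle is the integrand involving the intermediate point $w_t$, which in general is not in $V$; the rescaling identity above resolves this by trading the direction $x-y$ for the shorter direction $w_t-y$, putting $w_t$ in the role of the moving point while keeping $y\in V$ as the target. The apparent singularity at $t=0$ is harmless because $\tfrac{1}{t}$ is exactly compensated by $\|w_t-y\|$, and in fact the integrand equals zero at $t=0$. Justifying the fundamental theorem of calculus step reduces to the absolute continuity of $t\mapsto G(w_t)$ (immediate from Lipschitz continuity) together with the almost-everywhere identification of its classical derivative with the Gâteaux directional derivative $G'(w_t;x-y)$.
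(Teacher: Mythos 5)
Your proof is correct, but it takes a genuinely different route from the paper's. The paper scalarizes with a norming vector $v$, forms $\varphi(t)=\langle v,\,G(x)-G(y+t(x-y))-tG'(x;x-y)\rangle$, bounds $\varphi'(t)$ by applying uniform semi-smoothness* to the pair $(w_t,x)$ after rescaling the direction to $(t-1)(x-y)=w_t-x$, and finishes with the mean value theorem plus an appeal to Proposition~\ref{prop:semismooth} to pass to the Clarke Jacobian. You instead integrate the directional derivative along the segment and split the error into an integral term plus the single difference $G'(y;x-y)-G'(x;x-y)$, rescaling the integrand toward $y$ so that the hypothesis is only ever invoked for the pairs $(w_t,y)$ and $(x,y)$. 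This buys two things: the point playing the role of the element of $V$ in the definition of uniform semi-smoothness* is always $y\in V$ (in the paper's estimate the comparison is between $w_t$ and $x$, and $x$ is only assumed to lie in $U$, so your application of the hypothesis is the cleaner one), and you obtain a direct vector-valued estimate without the scalarization. The price is the $\varepsilon/2$ bookkeeping and the need to justify the identity $G(x)-G(y)=\int_0^1 G'(w_t;x-y)\,\diff t$, which you correctly reduce to absolute continuity of $t\mapsto G(w_t)$ together with the a.e.\ identification of its classical derivative with $G'(w_t;x-y)$; the linearity of $G'(w_t;\cdot)$ needed to match the left and right derivatives does follow from Proposition~\ref{prop:semismooth}. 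Both proofs share the same loose end, which you at least make explicit: for a merely Lipschitz $G$ the symbol $\nabla G(x)$ in the statement must be read as the (linear) Gâteaux derivative, equivalently the common action of the elements of $\partial^C G(x)$.
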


\begin{proof}
  Let $\varepsilon > 0$ be arbitrary chosen.
  Consider a vector $v \in \bbRn$ with $\|v\| = 1$ such that
  \begin{equation*}
    \langle v, G(x) - G(y) - G'(x; y - x) \rangle = \|G(x) - G(y) - G'(x; y - x)\|
  \end{equation*}
  and the function $\varphi:[0, 1] \to \bbR$ defined by
  \begin{equation*}
    \varphi(t) = \langle v, G(x) - G(y + t(x - y)) - t G'(x; x - y) \rangle.
  \end{equation*}
  Clearly, $\phi$ is differentiable because $G$ is Gâteaux differentiable and for
  any fixed $t \in [0, 1]$
  \begin{equation*}
    \varphi'(t) = \langle v, G'(y + t(x - y); x - y) - G'(x; x - y) \rangle \le
    \|G'(y + t(x - y); x - y) - G'(x; x - y)\|
  \end{equation*}
  Using the homogenity of the directional derivative, we get
  \begin{equation*}
    \varphi'(t) \le \frac{1}{1-t}\|G'(y + t(x - y); (t - 1)(x - y))
    - G'(x; (t - 1)(x - y))\|,
  \end{equation*}
  and finally, using the uniform semi-smoothness* there exists $\delta > 0$ such
  that if $\|x - y\| \le \delta$ then
  \begin{equation*}
    \frac{\varphi'(t)}{\|x - y\|} \le \frac{\|G'(y + t(x - y); (t - 1)(x - y))
    - G'(x; (t - 1)(x - y))\|}{\|(t - 1)(x - y)\|} \le \varepsilon.
  \end{equation*}
  Taking the supremum over all $t$ yields
  \begin{equation}\label{eq:semismooth-bound on t}
    \sup_{t \in [0, 1]}\frac{\varphi'(t)}{\|x - y\|} \le \varepsilon.
  \end{equation}

  On the other hand, because $\varphi$ is differentiable on $(0, 1)$ we can conclude,
  using the classic mean value theorem that
  \begin{equation*}
    |\varphi(1) - \varphi(0)| \le \sup_{t \in [0, 1]}\varphi'(t).
  \end{equation*}
  Clearly $\varphi(0) = 0$ and $\varphi(1) = \|G(x) - G(y) - G'(x; y - x)\|$, so substituting
  in~\eqref{eq:semismooth-bound on t} yields
  \begin{equation*}
    \frac{\|G(x) - G(y) - G'(x; y - x)\|}{\|x - y\|} \le \varepsilon,
  \end{equation*}
  for all $x$ and $y$ with $\|x - y\| \le \delta$.

  By the definition of the Clarke Jacobian
  \begin{equation*}
    \sup_{H \in \partial^C G(x)} \frac{\|G(x) - G(y) - H(y - x)\|}{\|x - y\|}
    = \lim_{x \to \xbar, \exists \nabla G(x)}\frac{\|G'(\xbar; y - \xbar) - \nabla G(x)(y - \xbar)\|}{\|y - \xbar\|}.
  \end{equation*}
  For the final step of the proof, we use~\ref{prop:semismooth} to conclude that
  \begin{equation*}
    \lim_{x \to \xbar, \exists \nabla G(x)}\frac{\|G'(\xbar; y - \xbar) - \nabla G(x)(y - \xbar)\|}{\|y - \xbar\|} =
    \frac{\|G(x) - G(y) - G'(x; y - x)\|}{\|x - y\|} \le \varepsilon,
  \end{equation*}
\end{proof}

\section{Under-determined Problems}
We now focus on the main object of this work, namely solving the system
of equations $G(x) = 0$ for $G:\bbRm \to \bbRn$ with $m > n$.
This under-determined nature of the
problem implies the existence of a manifold of solutions as opposed to the
singletons studied for the classic Newton's method.

The idea behind our work is to adapt the structure of the classic
Newton's method. We linearize the equation around a given start point,
and then we solve this linear
approximation. The solution of a linear equation is an affine subset, so we have
to pick one point from this subset and then we can repeat this procedure.

To formalize this intuition, we recall that by the definition of \lparan{}weak\rparan{} Newton
differentiability, the Newton differential provides a suitable local linear
approximation to the function. Let $G:\bbRm \to \bbRn$ be uniformly \lparan{}weakly\rparan{} Newton
differentiable on $\mathcal{Z} = \{\xbar~|~G(\xbar) = 0\}$ and $m > n$. Denote
the Newton differential of $G$ by $\mathcal{H}G$. Starting from a point $x^0 \in \bbRm$,
we construct the linear approximation of $G$ using $H \in \mathcal{H}G(x^0)$, yielding the
linear system
\begin{equation}\label{eq:constrian underdetermined linearization}
  G(x^0) + H (x - x^0) = 0.
\end{equation}
Denote the affine subspace of solutions
to~\eqref{eq:constrian underdetermined linearization} by $\mathcal{A}(H^{0}, x^0)$. The
main difficulty of this problem consists in choosing $x^1 \in \mathcal{A}(H^{0}, x^0)$. The
natural choice consists in projecting $x^0$ onto $\mathcal{A}(H^{0}, x^0)$.

\begin{remark}
  In this section we require, $G$ to be defined on the entire $\bbRm$ due to
  the geometric need that $\mathcal{A}$ produces affine subsets.
\end{remark}

\begin{definition}[Under-determined Newton-type Method]%
  \label{def:underdetemined newton method}
  Let $G:\bbRm \to \bbRn$ be uniformly weakly Newton differentiable on $\mathcal{Z} = \{\xbar ~|~ G(\xbar) = 0\}$
  and with $m > n$. The fixed point iteration of the proper (nowhere empty)
  set-valued operator $\mathcal{N}_{\mathcal{H}G}:\bbRm \setto \bbRm$, defined by
  \begin{equation*}
    \mathcal{N}_{\mathcal{H}G} x = \{\proj_{\mathcal{A}(H, x)} x~|~H \in \mathcal{H}G(x)\},
  \end{equation*}
  where
  \begin{equation*}
    \mathcal{A}(x, H) = \{y \in \bbRm~|~ G(x) + H (y - x) = 0\}
  \end{equation*}
  and
  \begin{equation}\label{eq:newton underdetermined iteration}
    x^{k+1} \in \mathcal{N}_{\mathcal{H}G}{x}^k
  \end{equation}
  is called an {\em under-determined Newton-type method}.
\end{definition}
\begin{remark}
  In the degenerate case that $H \in \mathcal{H}G(x)$ does not have full rank, i.e.\ is not
  surjective, $\mathcal{A}(H,x)$ might be empty, thus the algorithm can yield the empty
  set. This case is considered pathological and is indicative of an ill posed
  problem.
\end{remark}

\begin{remark}
  When $n = m$ and $H \in \mathcal{H}G(x)$ has full rank, the set $\mathcal{A}(x, H)$ is a singleton
  and the under-determined Newton-type method coincides with the standard
  Newton-type method.
\end{remark}

\begin{remark}
  The set $\mathcal{A}(x, H)$ is an affine set, so it is closed and convex, hence the
  projector operator onto $\mathcal{A}(x, H)$ is single-valued and non-expansive.
\end{remark}

In Figure~\ref{fig:underdetmined newton explained} one step of the algorithm, using
the Jacobian as the Newton differential, is illustrated. It is interesting to
observe that in this special case the affine approximation to the constraints
produces a set that is parallel to the tangent. Indeed, if $G(x) = 0$, the
set $\mathcal{A}(x, {\nabla G(x)})$ is equal to $\{y \in \bbRm~|~{\nabla G(x)}(y - x) = 0\}$, so the
affine space is orthogonal to $\nabla G(x)$ and tangent to the level set.
Alternatively, we can look at the graph manifold $\{(x, G(x))~|~x \in \bbRm\} \subseteq \bbR^{n + m}$,
together with its tangent plane at $(x, G(x))$, and consider the intersection
of this object with the plane $(x, 0)$.
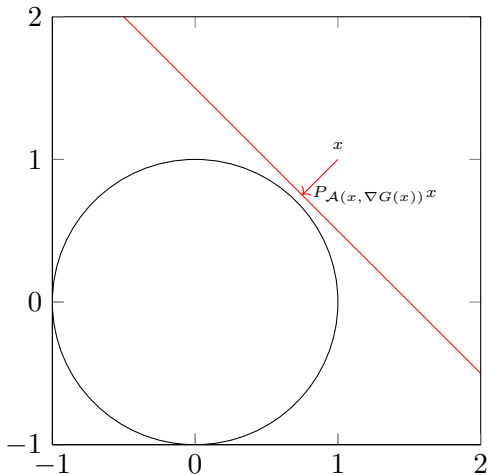
\begin{figure}[ht!]
  \begin{center}
    \begin{tikzpicture}
      \begin{axis}[
        xmin = -1, xmax = 2,
        ymin = -1, ymax = 2,
        zmin = 0, zmax = 1,
        axis equal image,
        view = {0}{90},
        ]
        \draw (axis cs: 0, 0) circle[radius=1];
        \draw (1,1) node[anchor=south] {\tiny{$x$}};
        \draw[red] (-0.5,2)  -- (2,-0.5) node[right,black] {\tiny{$\mathcal{A}(x, \nabla G(x))$}};
        \draw[red,->] (1,1)  -- (0.75,0.75) node[right,black] {\tiny{$P_{\mathcal{A}(x, \nabla G(x))}x$}};
      \end{axis}
    \end{tikzpicture}
  \end{center}
  \caption{One Step of a Newton-type Method for Under-determined Problems}%
  \label{fig:underdetmined newton explained}
\end{figure}

Due to the interaction between the lax nature of the Newton differential and
the geometric behavior of the projections, we will require a further technical
assumption.
\begin{definition}
  Let $G: \bbRm \to \bbRn$ be uniformly weakly Newton differentiable
  on $\mathcal{Z} = \{\xbar \in \bbRm~|~G(x) = 0\}$ with the Newton
  differential of $G$ being $\mathcal{H}G$. This Newton differential is called {\em
    geometrically compatible with $\mathcal{Z}$\/} if there exists $P \ge 1$ such that there
  exists $\delta > 0$ such that for
  all $x \in \bbRm$ with $\dist(x, \mathcal{Z}) < \delta$ and for all $H \in \mathcal{H}G(x)$,
  $\proj_{\mathcal{Z}} \proj_{\mathcal{A}(x, H)}x$ exists, is single-valued and
  \begin{align}\label{eq:linearly geometrically compatible}
    \|\proj_{\mathcal{A}(x, H)}x
    &- \proj_{\mathcal{Z}}\proj_{\mathcal{A}(x, H)}x\| \nonumber \\
    &\le P\|\proj_{\mathcal{Z}}\proj_{\mathcal{A}(x, H)}x - \proj_{\mathcal{A}(x, H)}\proj_{\mathcal{Z}}\proj_{\mathcal{A}(x, H)}x\|.
  \end{align}
\end{definition}
\begin{remark}
  The geometric interpretation of this fact is that the affine approximations
  of $\mathcal{Z}$ produced by $\mathcal{A}$ do not intersect $\mathcal{Z}$ orthogonally, and further this
  intersection angle is uniformly bounded away from $\dfrac{\pi}{2}$.
\end{remark}

Unfortunately, due to the fact that the feasible set is not generally a
singleton, the sequence produced by iterating this algorithm does not converge,
but rather it is a {\em sequence that approaches a set}. The next
theorem states this result in a rigors way. But first, we need a basic
lemma about projections on affine subsets and pseudo inverses.
\begin{lemma}\label{eq:unconstrained basic lemma}
  Let $A \in \bbR^{m \times n}$ be with full rank and $n < m$ and $b \in \bbRn$. Denote
  $\mathcal{S} = \{x \in \bbRm~|~Ax = b\}$. Then for all $x \in \bbRm$
  \begin{equation*}
    AA^{+}(x - \proj_{\mathcal{S}}x) = x - \proj_{\mathcal{S}}x.
  \end{equation*}
\end{lemma}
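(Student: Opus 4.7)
The plan is to recognize that the identity is a direct consequence of the orthogonal decomposition $\bbRm = \ker A \oplus \img A^T$ together with the characterization of projections onto affine subspaces. Concretely, I would show that $x - \proj_{\mathcal{S}}x$ lives in $\img A^T$, and then use the projector identities from Proposition~\ref{prop:psudo inverse properties} to conclude that the relevant product built from $A$ and $A^+$ fixes this vector.

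First, I would observe that since $A$ has full rank and $n<m$, the matrix $A$ is surjective, so $\mathcal{S}$ is a non-empty affine subspace whose underlying linear direction is $\ker A$. By the standard variational characterization of the projection onto a non-empty closed affine set, $\proj_{\mathcal{S}}x$ is the unique point $y\in\mathcal{S}$ such that $x-y$ is orthogonal to the direction of $\mathcal{S}$, i.e.\ $x-\proj_{\mathcal{S}}x\perp\ker A$. Hence $x-\proj_{\mathcal{S}}x\in(\ker A)^\perp$, and by part (v) of Proposition~\ref{prop:psudo inverse properties} we have $(\ker A)^\perp=\img A^T$. This is the only geometric ingredient needed.

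Second, I would invoke parts (iii)--(iv) of Proposition~\ref{prop:psudo inverse properties} to identify the relevant product of $A$ and $A^{+}$ as an orthogonal projector. Writing $x-\proj_{\mathcal{S}}x=A^T w$ for some $w\in\bbRn$, a one-line computation using $A^{+}=A^T(AA^T)^{-1}$ (the case $n<m$ in Definition~1.4) shows that the complementary projector $\Id-A^{+}A=\proj_{\ker A}$ annihilates $x-\proj_{\mathcal{S}}x$, since this vector lies in $\img A^T=(\ker A)^\perp$. Rearranging yields the claimed identity. (I note that, as written, the lemma prints $AA^{+}$ where the dimensions only close up under the reading $A^{+}A$; I would either correct the order or state the result in the equivalent projector form $\proj_{\img A^T}(x-\proj_{\mathcal{S}}x)=x-\proj_{\mathcal{S}}x$.)

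There is no substantial obstacle here: the proof is three lines once one matches the pieces. The only mild subtlety is bookkeeping with the paper's two mutually inconsistent shape conventions ($\bbR^{m\times n}$ versus $\bbR^{n\times m}$) and the corresponding ``which side'' form of the pseudo-inverse; with this pinned down, the statement reduces to ``$A^+A$ is the orthogonal projector onto $\img A^T$, and $x-\proj_{\mathcal{S}}x$ already lies there.''
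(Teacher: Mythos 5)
Your proposal is correct and follows essentially the same route as the paper's own (very terse) proof: note that $x-\proj_{\mathcal{S}}x$ is orthogonal to $\ker A$, hence lies in $\img A^T$, and then use the pseudo-inverse identities of Proposition~\ref{prop:psudo inverse properties} to see that the corresponding projector fixes it. In fact your write-up is the cleaner of the two --- the paper's proof contains a garbled existence claim (``there exists $y$ such that $y = x-\proj_{\mathcal{S}}x$'' where $A^Ty = x-\proj_{\mathcal{S}}x$ is meant) and, as you correctly flag, the statement's $AA^{+}$ should read $A^{+}A$ to match both the dimensions and the way the lemma is invoked later.
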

\begin{proof}
  Because $x - \proj_{S}x$ is orthogonal to $\ker A$, we know that there exists
  $y \in \bbRm$ such
  that $y = x - \proj_{S}x$. From Proposition~\ref{prop:psudo inverse properties},
  we know that $AA^{+}A^T = A^T$, and thus $AA^{+}A^T y = A^T y$.
\end{proof}

\begin{theorem}[Super-linear Convergence of Under-determined Newton Algorithm]%
  \label{thm:superlinear convergence of underdetemined newton}
  Let $G: \bbRm \to \bbRn$ be Lipschitz continuous with constant $L$ and uniform Newton
  differentiable on the feasibility set, $\mathcal{Z} = \{\xbar \in U~|~G(\xbar) = 0\}$.
  Assume that the projection operator onto $\mathcal{Z}$ is Lipschitz continuous with
  constant $L_{\mathcal{Z}}$. Denote
  the Newton differential of $G$ by $\mathcal{H}G$. Assume that $\mathcal{H}G$ is {\em
    geometrically compatible with $\mathcal{Z}$\/} with constant $P$ and that for all $x \in U$, all
  $H \in \mathcal{H}G(x)$ have full rank. Furthermore, assume that the set
  $\bigcup_{x \in U}\{\|H^{+}\|~|~H \in \mathcal{H}G(x)\}$ is bounded by $\Omega \in (0, \infty)$. Then any sequence
  ${\{x^k\}}_{k \in \mathbb{N}}$ with $x^0$ near $\mathcal{Z}$ and generated
  by~\eqref{eq:newton underdetermined iteration}, satisfies
  \begin{equation*}
    \forall k \in \bbN,\quad \dist(x^{k+1}, \mathcal{Z}) \le c^k\dist(x^k, \mathcal{Z}),
  \end{equation*}
  where ${\{c^k\}}_{k \in \bbN}$ is a positive real sequence convergent to $0$.
\end{theorem}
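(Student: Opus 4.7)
The plan is to show that each step of the iteration contracts the distance to $\mathcal{Z}$ by a factor that vanishes as the iterates approach $\mathcal{Z}$. First, I would rewrite the iteration in closed form: because $x^{k+1}$ is the projection of $x^k$ onto the affine subspace $\mathcal{A}(x^k, H^k) = \{y : H^k y = H^k x^k - G(x^k)\}$, the standard formula for the projector onto an affine set in terms of the pseudo-inverse gives
\begin{equation*}
  x^{k+1} = x^k - (H^k)^{+} G(x^k).
\end{equation*}
I would then set $\bar{x}^{k} = \proj_{\mathcal{Z}}(x^{k})$ and $\bar{x}^{k+1} = \proj_{\mathcal{Z}}(x^{k+1})$, both of which exist and are single-valued thanks to the assumed Lipschitz continuity of $\proj_{\mathcal{Z}}$ and the geometric compatibility hypothesis.

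Second, I would apply the geometric compatibility inequality at $x = x^k$, $H = H^k$. Since $\proj_{\mathcal{A}(x^k, H^k)} x^k = x^{k+1}$, it reduces to
\begin{equation*}
  \dist(x^{k+1}, \mathcal{Z}) \le P \,\|\bar{x}^{k+1} - \proj_{\mathcal{A}(x^k, H^k)} \bar{x}^{k+1}\|.
\end{equation*}
The affine projection identity rewrites the right-hand side as $P \,\|(H^k)^{+}(G(x^k) - H^k(x^k - \bar{x}^{k+1}))\|$, and since $G(\bar{x}^{k+1}) = 0$ we may subtract it inside the norm without changing the quantity. What remains is precisely the uniform Newton differentiability residual of $G$ evaluated at the pair $(x^k, \bar{x}^{k+1}) \in \bbRm \times \mathcal{Z}$. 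Combined with the uniform bound $\|(H^k)^{+}\| \le \Omega$, Definition~\ref{def:unif-newton-diff} then yields, for every prescribed $\varepsilon > 0$ and all sufficiently small $\|x^k - \bar{x}^{k+1}\|$,
\begin{equation*}
  \dist(x^{k+1}, \mathcal{Z}) \le P \Omega \varepsilon \,\|x^k - \bar{x}^{k+1}\|.
\end{equation*}

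Third, I would close the recursion by bounding $\|x^k - \bar{x}^{k+1}\|$ by $\dist(x^k, \mathcal{Z})$. Inserting $\bar{x}^k$ via the triangle inequality, using Lipschitz continuity of $\proj_{\mathcal{Z}}$, the closed-form expression for $x^{k+1} - x^k$, and Lipschitz continuity of $G$ together with $G(\bar{x}^k) = 0$, one obtains
\begin{equation*}
  \|x^k - \bar{x}^{k+1}\| \le \dist(x^k, \mathcal{Z}) + L_{\mathcal{Z}} \|(H^k)^{+} G(x^k)\| \le (1 + L_{\mathcal{Z}} \Omega L)\, \dist(x^k, \mathcal{Z}).
\end{equation*}
Combining the two displays yields $\dist(x^{k+1}, \mathcal{Z}) \le c^k \dist(x^k, \mathcal{Z})$ with $c^k = P \Omega (1 + L_{\mathcal{Z}} \Omega L)\, \varepsilon_k$, where $\varepsilon_k$ is the uniform Newton modulus at the scale $\|x^k - \bar{x}^{k+1}\|$.

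The main obstacle will not be the chain of inequalities but the bookkeeping needed to start and sustain the induction. I would need to choose $x^0$ so close to $\mathcal{Z}$ that the geometric compatibility radius, the Newton differentiability threshold, and the requirement $c^0 < 1$ are simultaneously in force, and then verify by induction that each subsequent $x^{k+1}$ remains in this region, so that all three conditions persist. Once monotone decrease of $\dist(x^k, \mathcal{Z})$ is established, $\|x^k - \bar{x}^{k+1}\| \to 0$ automatically drives $\varepsilon_k \to 0$ by uniform Newton differentiability, so $c^k \to 0$ and the claimed super-linear rate follows.
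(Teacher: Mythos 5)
Your proposal is correct and follows essentially the same route as the paper: apply the geometric compatibility inequality to $x^{k+1}=\proj_{\mathcal{A}(x^k,H^k)}x^k$, express the resulting residual via the pseudo-inverse as $\|(H^k)^{+}(G(x^k)-H^k(x^k-\bar{x}^{k+1}))\|$, invoke uniform Newton differentiability at the zero $\bar{x}^{k+1}\in\mathcal{Z}$, and close with the bound $\|x^k-\bar{x}^{k+1}\|\le(1+L_{\mathcal{Z}}L\Omega)\dist(x^k,\mathcal{Z})$ obtained from the Lipschitz continuity of $G$ and of $\proj_{\mathcal{Z}}$. The bookkeeping you flag at the end is handled in the paper exactly as you anticipate: one first fixes a constant $c$ with $cP\Omega(1+L\Omega)<1$ to get the self-mapping/linear-decrease property, and then lets the Newton modulus $c^k\to 0$ along the iterates.
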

\begin{proof}
  First, we need to prove that $\mathcal{N}_{\mathcal{H}G}$ is a self mapping on a small enough
  neighborhood, $V$, of $\mathcal{Z}$. From the definition of Newton differentiability,
  we can conclude that there exists $c$ with $cP\Omega(1 + L\Omega) < 1$ such that for any
  $\xbar \in \mathcal{Z}$ and for all $x$ in $V$,
  \begin{equation}\label{eq: underdetermined newton diff}
    \sup_{H \in \mathcal{H}G(x)}\|G(x) - H(x - \xbar)\| \le c\|x - \xbar\|.
  \end{equation}
  We can assume that $x$ is close enough to $\mathcal{Z}$ such
  that~\eqref{eq:linearly geometrically compatible} holds.
  Let $H \in \mathcal{H}G(x)$ and $y = \proj_{\mathcal{A}(x, H)}x \in \mathcal{N}_{\mathcal{H}G}(x)$. Using the definition
  of the projector together with~\eqref{eq:linearly geometrically compatible},
  we conclude that $\proj_{\mathcal{Z}} y$ is single valued and that
  \begin{align}\label{eq: underdetemined Newton}
    \dist(y, \mathcal{Z}) &= \|y - \proj_{\mathcal{Z}}y\| \le P\|\proj_{\mathcal{Z}}y - \proj_{\mathcal{A}(x, H)}\proj_{\mathcal{Z}}y\|.
  \end{align}

  Using~\eqref{eq:unconstrained basic lemma} shows
  \begin{align}\label{eq underconsgtin klasdjflkn kjl}
    \|\proj_{\mathcal{Z}}y
    &- \proj_{\mathcal{A}(x, H)}\proj_{\mathcal{Z}}y\|
      = \|H^{+}H(\proj_{\mathcal{Z}}y - \proj_{\mathcal{A}(x, H)}\proj_{\mathcal{Z}}y)\| \nonumber \\
    &\le \|H^{+}\|\|H(\proj_{\mathcal{Z}}y - \proj_{\mathcal{A}(x, H)}\proj_{\mathcal{Z}}y)\|.
  \end{align}
  For the next step we use the definition of $\mathcal{A}(x, H)$ and the fact that
  $\proj_{\mathcal{A}(x, H)} \proj_{\mathcal{Z}}y \in \mathcal{A}(x, H)$, yielding
  \begin{equation*}
    G(x) + H(\proj_{\mathcal{A}(x, H)} \proj_{\mathcal{Z}}y - x) = 0.
  \end{equation*}
  Substituting in~\eqref{eq underconsgtin klasdjflkn kjl} gives
  \begin{align}\label{eq underconsgtin khl lisdlj kjl}
    \|&\proj_{\mathcal{Z}}y - \proj_{\mathcal{A}(x, H)}\proj_{\mathcal{Z}}y\| \nonumber \\
     &\le \|H^{+}\|\|H(\proj_{\mathcal{Z}}y - \proj_{\mathcal{A}(x, H)}\proj_{\mathcal{Z}}y)
       +  G(x) + H(\proj_{\mathcal{A}(x, H)} \proj_{\mathcal{Z}}y - x) \|.
       \nonumber \\
     &= \|H^{+}\|\|H(\proj_{\mathcal{Z}}y - \proj_{\mathcal{A}(x, H)}\proj_{\mathcal{Z}}y
       + \proj_{\mathcal{A}(x, H)} \proj_{\mathcal{Z}}y - x) +  G(x)\|.
       \nonumber \\
     &= \|H^{+}\|\|G(x) - H(x - \proj_{\mathcal{Z}}y)\|.
  \end{align}
  Clearly, $\proj_{\mathcal{Z}} y \in \mathcal{Z}$, so we can apply the definition of Newton
  differentiability from~\eqref{eq: underdetermined newton diff} to bound
  in~\eqref{eq underconsgtin khl lisdlj kjl} by
  \begin{equation}\label{eq:underdermined newton eq 1}
    \|\proj_{\mathcal{Z}}y - \proj_{\mathcal{A}(x, H)}\proj_{\mathcal{Z}}y\| \le c\|H^{+}\|\|(x - \proj_{\mathcal{Z}}y)\|.
  \end{equation}
  Finally, reusing~\eqref{eq: underdetemined Newton} and the bound
  $\|H^{+}\| \le \Omega$ we conclude that
  \begin{equation}\label{eq:underdetermined work in progress}
    \dist(y, \mathcal{Z}) \le cP\Omega\|(x - \proj_{\mathcal{Z}}y)\|.
  \end{equation}

  Next, we use the triangle inequality and the definition of the projector,
  together with the fact that the projection operator onto $\mathcal{Z}$ is Lipschitz
  continuous to conclude that
  \begin{align*}
    \|x - \proj_{\mathcal{Z}}y\|
    &\le \|x - \proj_{\mathcal{Z}}x\| + \|\proj_{\mathcal{Z}}x - \proj_{\mathcal{Z}}y\| \nonumber \\
    &= \dist(x, \mathcal{Z})  + \|\proj_{\mathcal{Z}}x - \proj_{\mathcal{Z}}y\| \nonumber \\
    &\le \dist(x, \mathcal{Z})  + L_{\mathcal{Z}}\|x - y\|.
  \end{align*}
  Because $y = \proj_{\mathcal{A}(x, H)}x$, we can use Lemma~\ref{eq:unconstrained basic lemma}
  to show
  \begin{equation*}
    H^{+}H(y - x) = y - x.
  \end{equation*}
  And because $y \in \mathcal{A}(x, H)$, so
  \begin{equation*}
    G(x) + H(y - x) = 0,
  \end{equation*}
  we can combine the two equations, giving
  \begin{equation*}
    x - y = H^{+}G(x).
  \end{equation*}
  From the definition of $\mathcal{Z}$, $G(\proj_{\mathcal{Z}}x) = 0$ and recalling the fact
  that $G$ is Lipschitz continuous with constant $L$ and $\|H^{+}\| \le \Omega$, we obtain
  \begin{align}
    \|x - y\|
    &\le \|H^{+}\|\|G(x) - G(\proj_{\mathcal{Z}}x)\| \le
    L\|H^{+}\|\|x - \proj_{\mathcal{Z}}x\| \nonumber \\
    &= L\|H^{+}\| \dist(x, \mathcal{Z}) \le L\Omega\dist(x, \mathcal{Z}) \label{eq:important from the past}
  \end{align}
  and
  \begin{equation}\label{eq:underdetermined here i am}
    L_{\mathcal{Z}}\|x - y\| \le L_{\mathcal{Z}}L\Omega\dist(x, \mathcal{Z}).
  \end{equation}

  Combining~\eqref{eq:underdetermined work in progress}
  with~\eqref{eq:underdermined newton eq 1} and~\eqref{eq:underdetermined here i am}
  yields
  \begin{equation*}
    \dist(y, \mathcal{Z}) \le cP\Omega(1 + L_{\mathcal{Z}}L\Omega)\dist(x, \mathcal{Z}).
  \end{equation*}

  This shows that $\mathcal{N}_{\mathcal{H}G}$ is a self mapping on a neighborhood of $\mathcal{Z}$ and
  any sequence ${\{x^k\}}_{k \in \bbN}$ satisfies
  \begin{equation}\label{eq:linear convergence for underdetermined}
    \forall k \in \bbN,\quad\dist(x^{k+1}, \mathcal{Z}) \le cP\Omega(1 + L\Omega)\dist(x^k, \mathcal{Z}).
  \end{equation}

  We are all set up to show the conclusion of the theorem. Let ${\{c^k\}}_{k \in \bbN}$
  be defined by
  \begin{equation*}
     \forall k \in \bbN,\quad c^k = \sup_{\xbar \in \mathcal{Z}}\sup_{H \in \mathcal{H}G(x^k)}\frac{\|G(x^k) - H(x^k - \xbar)\|}{\|x^k - \xbar\|},
  \end{equation*}
  and using the same reasoning as before, we know that
  \begin{equation*}
    \forall k \in \bbN,\quad\dist(x^{k+1}, \mathcal{Z}) \le c^k P\Omega(1 + L\Omega)\dist(x^k, \mathcal{Z}).
  \end{equation*}
  It remains to show that ${\{c^k\}}_{k \in \bbN}$ converges to $0$. Let $\varepsilon > 0$ and by uniform
  Newton differentiability, we know that there exists $\delta > 0$ such that, if for any $k \in \bbN$,
  $\dist(x^k, \mathcal{Z}) \le \delta$, then $c^k < \varepsilon$.
  Iterating~\eqref{eq:linear convergence for underdetermined}, we deduce that
  \begin{equation*}
    \forall k \in \bbN,\quad\dist(x^{k}, \mathcal{Z}) \le {(cP\Omega(1 + L\Omega))}^k\dist(x^0, \mathcal{Z}),
  \end{equation*}
  and solving for $k$ we know that if
  \begin{equation*}
    k \ge \frac{\log\frac{\delta}{\dist(x^0, \mathcal{Z})}}{\log cP\Omega(1 + L\Omega)},
  \end{equation*}
  then
  \begin{equation*}
    \dist(x^k, \mathcal{Z}) \le \delta,
  \end{equation*}
  and thus $c^k \le \varepsilon$. Since $\varepsilon$ was arbitrary, we conclude that
  $\lim_{k \to \infty}c^k = 0$ in order to complete the proof.
\end{proof}
\begin{remark}
  If we further assume that $\mathcal{Z}$ is compact, we can conclude that the
  sequence ${\{x^k\}}_{k \in \bbN}$ is bounded and thus it has a convergent subsequence.
\end{remark}
\begin{remark}
  This proof can be adapted to obtain linear convergence under the weaker
  assumption of uniform weak Newton differentiability, together with $M\Omega < 1$,
  where $M$ is as in Definition~\ref{def:unif-newton-diff}.
\end{remark}

Different choices of $x^{k+1} \in \mathcal{A}(H, x^k)$ can yield different convergence
results. For instance, fixing a point $\xbar \in \mathcal{Z}$ and picking
$x^{k+1} \in \proj_{\mathcal{A}(H^k, x^k)}\xbar$ can produce a superlinearly convergent
sequence, but such a method is not applicable in a practical algorithm due
to employing a point $\xbar$ in the solution set $\mathcal{Z}$. A practical approach to
approximating this algorithm has been presented in~\cite{ChaDav23Asup} and
is based on using the history of $x^{0},\dots,x^{k}$ to compute $x^k$. The choice
presented in our work has the benefit of ease of computation, as the next
lemma will show.
\begin{lemma}[Computation of Under-determined Newton Algorithm]
  Let $G:\bbRm \to \bbRn$ be uniform weakly Newton differentiable on $\mathcal{Z} = \{\xbar \in \bbRm~|~G(\xbar) = 0\}$
  and with $n < m$. Then $\mathcal{N}_{\mathcal{H}G}$ defined
  in Definition~\ref{def:underdetemined newton method} can be computed by
  \begin{equation*}
    \mathcal{N}_{\mathcal{H}G}(x) = \{x - H^{+}G(x)~|~H \in \mathcal{H}G(x)\}.
  \end{equation*}
\end{lemma}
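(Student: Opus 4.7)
The plan is to unpack the definition of $\mathcal{N}_{\mathcal{H}G}$ and reduce the projection onto the affine set $\mathcal{A}(x, H)$ to a minimum-norm problem, which is then solved exactly by the pseudo-inverse. I fix $x \in \bbRm$ and $H \in \mathcal{H}G(x)$ throughout, and show the single identity $\proj_{\mathcal{A}(x, H)} x = x - H^{+} G(x)$; taking the set on the right-hand side as $H$ ranges over $\mathcal{H}G(x)$ then yields the claim.

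First, I would rewrite $\mathcal{A}(x, H) = \{y \in \bbRm \mid H y = Hx - G(x)\}$ and perform the change of variables $z = y - x$. Then $\proj_{\mathcal{A}(x,H)} x$ equals $x + z^\star$ where $z^\star$ is the minimum-norm solution of $Hz = -G(x)$, since minimizing $\|y - x\|^2$ under the affine constraint becomes minimizing $\|z\|^2$ under $Hz = -G(x)$.

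Second, I would verify that $z^\star = -H^{+} G(x)$ is indeed this minimum-norm solution. Since $H \in \bbRnxm$ has full rank with $n < m$, the pseudo-inverse is $H^{+} = H^T(HH^T)^{-1}$, so that $H H^{+} = \Id_n$ and hence $H z^\star = -H H^{+} G(x) = -G(x)$, confirming feasibility. For optimality, any other feasible $z$ differs from $z^\star$ by an element of $\ker H$; but $z^\star \in \img H^T$, and by Proposition~\ref{prop:psudo inverse properties} we have $\ker H \perp \img H^T$, so the Pythagorean identity gives $\|z\|^2 = \|z^\star\|^2 + \|z - z^\star\|^2 \ge \|z^\star\|^2$. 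Thus $z^\star$ is the unique minimum-norm solution.

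Combining the two steps gives $\proj_{\mathcal{A}(x, H)} x = x - H^{+} G(x)$, and the stated characterization of $\mathcal{N}_{\mathcal{H}G}(x)$ follows by taking the union over $H \in \mathcal{H}G(x)$. There is no genuine obstacle here; the only subtlety is the implicit hypothesis that each $H \in \mathcal{H}G(x)$ has full row rank so that $\mathcal{A}(x, H)$ is nonempty and the formula $H^{+} = H^T (H H^T)^{-1}$ applies, a hypothesis already acknowledged as the non-degenerate case in the remark following Definition~\ref{def:underdetemined newton method}.
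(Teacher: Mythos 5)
Your proposal is correct and follows essentially the same route as the paper: both reduce $\proj_{\mathcal{A}(x,H)}x$ to the orthogonality condition $x - y \perp \ker H$ (equivalently, a minimum-norm problem for $Hz = -G(x)$) and then exploit $\ker H \perp \img H^T$ together with the full-rank formula $H^{+} = H^T(HH^T)^{-1}$. The only cosmetic difference is that the paper derives $y = x - H^{+}G(x)$ by solving the first-order optimality system for the multiplier $z$, whereas you verify the candidate directly via feasibility and the Pythagorean identity; you also make explicit the full-rank hypothesis that the paper uses implicitly.
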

\begin{proof}
  Let $x \in \bbRm$ and $y = \proj_{\mathcal{A}(x, H)}$. We rephrase the projection as an optimization problem
  \begin{equation*}
    \minimize{y \in \bbRm}{\|x - y\|^2}{y \in \mathcal{A}(x, H),}
  \end{equation*}
  or alternatively, using the definition of $\mathcal{A}(x, H)$
  \begin{equation}\label{eq:uunderdetermined variational}
    \minimize{y \in \bbRm}{\|x - y\|^2}{G(x) + H(y - x) = 0.}
  \end{equation}
  Because this is a convex problem, the first order optimality conditions are
  necessary and sufficient, so $y$ is the solution
  of~\eqref{eq:uunderdetermined variational} if and only if
  \begin{equation*}
    2(x - y) \perp \ker H.
  \end{equation*}
  Equivalently, there exists $z \in \bbRn$ with
  \begin{equation}\label{eq:underdeermined lagrange multipliers}
    x - y = H^T z.
  \end{equation}
  Because $y$ is the solution to the optimization problem~\eqref{eq:uunderdetermined variational},
  it is a feasible point,
  so
  \begin{equation*}
    G(x) + H(y - x) = 0.
  \end{equation*}
  Substituting from~\eqref{eq:underdeermined lagrange multipliers} shows that
  \begin{equation*}
    G(x) + H(x - H^{T}z - x) = 0.
  \end{equation*}
  Next, because $H$ has full rank, we know that $HH^T$ is invertible and this
  allows us to solve for $z$ yielding,
  \begin{equation*}
    z = {(HH^T)}^{-1}G(x)
  \end{equation*}
  The final step requires us to use~\eqref{eq:underdeermined lagrange multipliers}
  together with the definition of the pseudo inverse matrix to express
  \begin{equation*}
    y = x - H^{T}{(HH^T)}^{-1}G(x) = x - H^{+}G(x),
  \end{equation*}
  in order to complete the proof.
\end{proof}

\begin{remark}
  Using the same assumptions and arguments, the update rule
  (without the line-search condition) from Algorithm 1 from~\cite{PolTre17Solv}
  can be expressed as
  \begin{equation*}
    \mathcal{N}_{\mathcal{H}G}:\bbRm \setto \bbRm,\quad \mathcal{N}_{\mathcal{H}G}(x) = \{- H^{+}(G(x) - Hx)~|~H \in \mathcal{H}G(x)\}.
  \end{equation*}
\end{remark}

\section{Application: (Semi)-Smooth Systems}
In this section we focus on particularizing the proposed algorithm to smooth and
semi-smooth systems. The main theoretical result is that for single-valued
Newton differentials linear geometric compatibility is implied by uniform
continuity. This fact can be used, together with the Newton differentiability
of smooth and semi-smooth maps to state corollaries from
Theorem~\ref{thm:superlinear convergence of underdetemined newton}.
\begin{theorem}
  Let $G: \bbRm \to \bbRn$ be in Lipschitz continuous with constant $L$ and
  Newton differentiable with a single-valued Newton differential $\mathcal{H} G$.
  Denote
  $\mathcal{Z} = \{\xbar \in \bbRm~|~G(\xbar) = 0\}$.
  Assume that $\proj_\mathcal{Z}$ exists and is single-valued and there exits $\Omega \in (0, \infty)$ such
  that $\|{\mathcal{H} G(x)}^+\| \le \Omega$ for all $x \in \bbRm$.
  If $\mathcal{H} G$ is uniformly continuous then it is
  {\em geometrically compatible with $\mathcal{Z}$}.
\end{theorem}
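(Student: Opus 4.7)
The plan is to reduce the geometric-compatibility inequality to an angle estimate via an orthogonal decomposition. Writing $y = \proj_{\mathcal{A}(x,\mathcal{H}G(x))}(x)$, $z = \proj_\mathcal{Z}(y)$, and $w = \proj_{\mathcal{A}(x,\mathcal{H}G(x))}(z)$, the set $\mathcal{A}(x,\mathcal{H}G(x))$ is parallel to $\ker \mathcal{H}G(x)$, so $y - w \in \ker \mathcal{H}G(x)$ while $z - w \in \img \mathcal{H}G(x)^T$; since these are orthogonal complements, $\|y - z\|^2 = \|y - w\|^2 + \|z - w\|^2$. The goal $\|y - z\| \le P \|z - w\|$ is therefore equivalent to $\|\proj_{\ker \mathcal{H}G(x)}(y - z)\| \le \epsilon \|y - z\|$ for some uniform $\epsilon < 1$, i.e., to showing that $y - z$ is nearly aligned with $\img \mathcal{H}G(x)^T$.

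Next I would establish the same bound with the kernel evaluated at $z$ rather than at $x$, i.e., $\|\proj_{\ker \mathcal{H}G(z)}(y - z)\| = o(\|y - z\|)$. Since $z$ minimizes $\|y - \cdot\|$ over $\mathcal{Z}$, for every $z' \in \mathcal{Z}$ one has $\langle y - z, z' - z\rangle \le \tfrac{1}{2}\|z' - z\|^2$. I would apply this with $z' = z_t$ ranging over a local parametrization of $\mathcal{Z}$ along the tangent-like direction $\ker \mathcal{H}G(z)$: for any unit $\hat v \in \ker \mathcal{H}G(z)$ and small $t > 0$, Newton differentiability at $z$ together with $\|\mathcal{H}G(z + t\hat v)\hat v\| = \|(\mathcal{H}G(z + t\hat v) - \mathcal{H}G(z))\hat v\| = o(1)$ (uniform continuity) gives $\|G(z + t\hat v)\| = o(t)$, and a local Newton-style contraction on the map $u \mapsto u - \mathcal{H}G(u)^+ G(u)$ started from $z + t\hat v$ produces a point $z_t \in \mathcal{Z}$ with $z_t = z + t\hat v + o(t)$. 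Substituting $z_t$ into the projection inequality, dividing by $t$, and letting $t \downarrow 0$ (together with the analogous bound for $-\hat v$) gives $|\langle y - z, \hat v\rangle| = o(\|y - z\|)$; choosing $\hat v$ in the direction of $\proj_{\ker \mathcal{H}G(z)}(y - z)$ closes this step.

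The transfer from $\ker \mathcal{H}G(z)$ to $\ker \mathcal{H}G(x)$ is by uniform continuity. Because $HH^T$ is uniformly bounded away from singular on the set $\{H : \|H^+\| \le \Omega\}$, the map $H \mapsto H^+ = H^T(HH^T)^{-1}$ is uniformly continuous there, hence so is $\proj_{\ker H} = I - H^+ H$; composing with the uniform continuity of $\mathcal{H}G$ bounds $\|\proj_{\ker \mathcal{H}G(x)} - \proj_{\ker \mathcal{H}G(z)}\|$ by a modulus applied to $\|x - z\|$. But $\|x - z\| \le \|x - y\| + \|y - z\|$, and $\|x - y\| = \|\mathcal{H}G(x)^+ G(x)\| \le L\Omega\,\dist(x, \mathcal{Z})$ by Lipschitz continuity of $G$, while $\|y - z\| \le \|y - \proj_\mathcal{Z}(x)\|$ is of the same order, so $\|x - z\| \to 0$ as $\dist(x, \mathcal{Z}) \to 0$. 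Combining with the previous paragraph gives $\|\proj_{\ker \mathcal{H}G(x)}(y - z)\| \le \epsilon' \|y - z\|$ for any prescribed $\epsilon' < 1$ on a small enough neighborhood, and $\|z - w\|^2 = \|y - z\|^2 - \|y - w\|^2 \ge (1 - \epsilon'^2)\|y - z\|^2$ yields geometric compatibility with $P = 1/\sqrt{1 - \epsilon'^2}$.

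The main obstacle is the local parametrization $z_t$, which is essentially a Newton-style implicit-function / metric-regularity statement at the single zero $z$. The apparent danger is circularity with Theorem~\ref{thm:superlinear convergence of underdetemined newton}, whose super-linear content is closely related. I would sidestep the circularity by running the fixed-point argument only in a small ball around $z$: the initial residual $G(z + t\hat v)$ is already $o(t)$, and the contraction rate at subsequent iterates follows from Newton differentiability at $z$, uniform continuity of $\mathcal{H}G$, and $\|\mathcal{H}G(x)^+\| \le \Omega$ alone --- hypotheses that are strictly local at $z$ and do not presuppose geometric compatibility.
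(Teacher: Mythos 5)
Your overall architecture matches the paper's: reduce geometric compatibility to showing that $y-z$ makes a uniformly non-right angle with $\ker \mathcal{H}G(x)$, transfer the kernel from the foot point to $x$ via uniform continuity of $\mathcal{H}G$ (your observation that $H \mapsto \proj_{\ker H} = I - H^+H$ is uniformly continuous on $\{H : \|H^+\| \le \Omega\}$ is in fact cleaner than the paper's trace/Frobenius manipulations), and close by bounding $\|x - z\|$ with $\|x-y\| = \|\mathcal{H}G(x)^+G(x)\| \le L\Omega\,\dist(x,\mathcal{Z})$, exactly as the paper does. The Pythagorean reformulation $\|y-z\|^2 = \|y-w\|^2 + \|z-w\|^2$ is correct and equivalent to the paper's cosine bound.

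The genuine gap is in your second paragraph, the construction of $z_t \in \mathcal{Z}$ with $z_t = z + t\hat v + o(t)$ for $\hat v \in \ker\mathcal{H}G(z)$. The estimate $\|G(z+t\hat v)\| = o(t)$ is fine, but passing from a small residual to a nearby zero requires an error bound of the form $\dist(u,\mathcal{Z}) \le \kappa\|G(u)\|$ near $z$, and the Newton-style contraction you invoke to produce it does not close under the stated hypotheses. To show the residual contracts along $u^{k+1} = u^k - \mathcal{H}G(u^k)^+G(u^k)$ you must bound $\|G(u^{k+1})\| = \|G(u^{k+1}) - G(u^k) - \mathcal{H}G(u^k)(u^{k+1}-u^k)\|$, a linearization error between two iterates \emph{neither of which lies in $\mathcal{Z}$}; Newton differentiability (pointwise at points of $\mathcal{Z}$, or uniform on $\mathcal{Z}$ as in Definition~\ref{def:unif-newton-diff}) only controls errors whose base point is in $\mathcal{Z}$, and Lipschitz continuity of $G$ plus uniform continuity of $\mathcal{H}G$ do not supply the missing estimate for a non-smooth $G$. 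So the circularity you worried about is not the problem; the problem is that the contraction itself is unproven. For fairness: the paper's own proof simply asserts the corresponding fact ($y - \ybar \perp \ker\mathcal{H}G(\ybar)$, i.e., that $\ker\mathcal{H}G$ is tangent to $\mathcal{Z}$ at the projection foot) with no justification at all, so you have correctly located, and made an honest attempt at, the one step that actually carries the geometric content --- but as written that step is not established.
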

\begin{proof}
  Let us simplify the notation by defining for $x \in \bbRm$ $y = \proj_{\mathcal{A}(x, \mathcal{H} G(x))}x$,
  $\ybar = \proj_\mathcal{Z} y$, and $z = \proj_{\mathcal{A}(x, \mathcal{H} G(x))}x$.

  Clearly, setting
  \begin{equation*}
    P = \frac{\| y - \ybar \| \| \ybar - z\|}{\langle y - \ybar, \ybar - z \rangle}
  \end{equation*}
  would give equality in~\eqref{eq:linearly geometrically compatible}, so it
  only remains to show that this choice of $P$ is bounded, or equivalently that
  there is $\Omega_P > 0$ such that
  \begin{equation*}
    \cos  \angle(y - \ybar, \ybar - z) \ge \Omega_P.
  \end{equation*}

  Because
  \begin{equation*}
    y - \ybar \perp \ker \mathcal{H} G(\ybar)
  \end{equation*}
  and
  \begin{equation}
     \ybar - z \perp \ker \mathcal{H} G(x)
  \end{equation}
  we can compute
  \begin{align*}
    \cos  \angle(y - \ybar, \ybar - z) \ge \Omega_P
    &= \cos \arccos \frac{\tr {\mathcal{H} G(x)}\mathcal{H} G(\ybar)}{\sqrt{\tr {\mathcal{H} G(x)}^2
      \tr {\mathcal{H} G(\ybar)}^2}} \\
    &= \frac{\tr {\mathcal{H} G(x)}\mathcal{H} G(\ybar)}{\sqrt{\tr {\mathcal{H} G(x)}^2 \tr {\mathcal{H} G(\ybar)}^2}}. \\
  \end{align*}

  Next, we are going to compute
  \begin{align}
    &\left|\frac{\tr {\mathcal{H} G(\ybar)} {\mathcal{H} G(\ybar)}}{\sqrt{\tr {\mathcal{H} G(\ybar)}^2 \tr {\mathcal{H} G(\ybar)}^2}}
    - \frac{\tr {\mathcal{H} G(x)}{\mathcal{H} G(\ybar)}}{\sqrt{\tr {\mathcal{H} G(x)}^2 \tr {\mathcal{H} G(\ybar)}^2}}\right|
      \nonumber \\
    \quad&=\left|\frac{\tr {\mathcal{H} G(\ybar)} {\mathcal{H} G(\ybar)}}{\sqrt{\tr {\mathcal{H} G(\ybar)}^2 \tr {\mathcal{H} G(\ybar)}^2}}
      - \frac{\tr {\mathcal{H} G(x)}{\mathcal{H} G(\ybar)}}{\sqrt{\tr {\mathcal{H} G(x)}^2 \tr {\mathcal{H} G(\ybar)}^2}}\right|
      \nonumber \\
    &= \frac{\tr ({\mathcal{H} G(\ybar)} \sqrt{\tr {\mathcal{H} G(x)}^2} - {\mathcal{H} G(x)} \sqrt{\tr {\mathcal{H} G(\ybar)}^2})
      {\mathcal{H} G(\ybar)} \sqrt{\tr {\mathcal{H} G(\ybar)}^2}
      }{\sqrt{\tr {\mathcal{H} G(\ybar)}^2 \tr {\mathcal{H} G(\ybar)}^2 \tr {\mathcal{H} G(x)}^2 \tr {\mathcal{H} G(\ybar)}^2}}.
      \label{eq:proof smooth I am close}
  \end{align}

  Because the Frobenius norm is induced by the trace inner product, so
  $\|{\mathcal{H} G(x)}\|_F = \sqrt{\tr {\mathcal{H} G(x)}^2}$, we can use the norm equivalence to conclude
  that there exists a constant $c_F > 0$ such that
  $\|{\mathcal{H} G(x)}^{+}\|_F \le c_F\|{\mathcal{H} G(x)}^{+}\| \le c_F\Omega_G$, and also
  $\|{\mathcal{H} G(x)}{\mathcal{H} G(x)}^{+}{\mathcal{H} G(x)}\|_F = \|{\mathcal{H} G(x)}\|_F \le \|{\mathcal{H} G(x)}\|_F^2\|{\mathcal{H} G(x)}^{+}\|_F$
  holds for any ${\mathcal{H} G(x)}$ with $\|{\mathcal{H} G(x)}\| \le \Omega_G$, so
  \begin{equation*}
    \frac{1}{\|{\mathcal{H} G(x)}\|_F} \le c_F\Omega_G.
  \end{equation*}

  Substituting in~\eqref{eq:proof smooth I am close}, gives
  \begin{align}
    1
    &- \frac{\tr {\mathcal{H} G(x)}{\mathcal{H} G(\ybar)}}{\sqrt{\tr {\mathcal{H} G(x)}^2 \tr {\mathcal{H} G(\ybar)}^2}}
      \nonumber \\
    &\le c_F^2\Omega_G^2\tr (({\mathcal{H} G(\ybar)} \|{\mathcal{H} G(x)}\|_F
      - {\mathcal{H} G(x)} \|{\mathcal{H} G(\ybar)}^2\|_F) {\mathcal{H} G(\ybar)}) \|{\mathcal{H} G(\ybar)}\|_F
    \nonumber \\
    &\le c_F^3\Omega_G^3\tr (({\mathcal{H} G(\ybar)} \|{\mathcal{H} G(x)}\|_F
      - {\mathcal{H} G(x)} \|{\mathcal{H} G(\ybar)}\|_F) {\mathcal{H} G(\ybar)}) \nonumber \\
    &\le c_F^3\Omega_G^3 \| {\mathcal{H} G(\ybar)}\|{\mathcal{H} G(x)}\|_F
      - {\mathcal{H} G(x)} \|{\mathcal{H} G(\ybar)}\|_F\|_F \|{\mathcal{H} G(\ybar)}\|_F \nonumber \\
    &\le c_F^4\Omega_G^4 \| {\mathcal{H} G(\ybar)}\|{\mathcal{H} G(x)}\|_F
      - {\mathcal{H} G(x)} \|{\mathcal{H} G(\ybar)}\|_F\|_F \nonumber \\
    &\le c_F^4\Omega_G^4 \| {\mathcal{H} G(\ybar)}(\|{\mathcal{H} G(x)}\|_F -\|{\mathcal{H} G(\ybar)}\|_F)
      + ({\mathcal{H} G(\ybar)} - {\mathcal{H} G(x)})\|{\mathcal{H} G(\ybar)}\|_F\|_F \nonumber \\
    &\le c_F^4\Omega_G^4 \| {\mathcal{H} G(\ybar)}(\|{\mathcal{H} G(x)}\|_F -\|{\mathcal{H} G(\ybar)}\|_F)
      + ({\mathcal{H} G(\ybar)} - {\mathcal{H} G(x)})\|{\mathcal{H} G(\ybar)}\|_F\|_F \nonumber \\
    &\le c_F^4\Omega_G^4 \|{\mathcal{H} G(\ybar)}\|_F|\|{\mathcal{H} G(x)}\|_F
      - \|{\mathcal{H} G(\ybar)}\|_F| + \|{\mathcal{H} G(\ybar)} - {\mathcal{H} G(x)}\|_F\|{\mathcal{H} G(\ybar)}\|_F \nonumber \\
    &\le 2 c_F^5\Omega_G^5 \|{\mathcal{H} G(\ybar)} - {\mathcal{H} G(x)}\|_F \nonumber \\
    &\le 2 c_F^6\Omega_G^5 \|{\mathcal{H} G(\ybar)} - {\mathcal{H} G(x)}\|.\label{eq:proof-main inequakity big}
  \end{align}

  Finally, we can use the uniform continuity of $\mathcal{H} G$ to conclude that there
  exists $\delta_G>0$, such that for all $x \in \bbRm$ with $\|x - \ybar\| \le \delta_G$,
  \begin{equation*}
    \|{\mathcal{H} G(\ybar)} - {\mathcal{H} G(x)}\| \le \frac{1}{4 c_F^6\Omega_G^5 }.
  \end{equation*}
  This allows us to rearrange in~\eqref{eq:proof-main inequakity big}, yielding
  \begin{equation*}
    \frac{\tr {\mathcal{H} G(x)}{\mathcal{H} G(\ybar)}}{\sqrt{\tr {\mathcal{H} G(x)}^2 \tr {\mathcal{H} G(\ybar)}^2}} \ge 1
    - 2 c_F^6\Omega_G^5 \frac{1}{4 c_F^6\Omega_G^5 } = \frac{1}{2},
  \end{equation*}
  for any $x$ with $\|x - \ybar\| \le \delta_G$.

  The final step of the proof consists in showing that there is a $\delta > 0$ such the
  $\dist(x, \mathcal{Z}) \le \delta$ implies that $\|x - \ybar\| \le \delta_G$. Let $\xbar = \proj_\mathcal{Z} x $ and
  because $\ybar = \proj_{\mathcal{Z}}y$, we can see that $\|y - \ybar\| \le \|y - \xbar\|$, so using the
  triangle inequality we can bound
  \begin{equation*}
    \|x - \ybar\| \le \|x - y\| + \|y - x\| + \|x - \xbar\|.
  \end{equation*}
  Using~\ref{eq:important from the past} we conclude that
  \begin{equation*}
    \|x - \ybar\| \le (2 L\Omega + 1)\| x - \xbar \|
  \end{equation*}
  and as such taking $\delta = \delta_G/(2 L\Omega + 1)$ completes the proof.
\end{proof}

This allows us to state the result concerning the application of our algorithm
to smooth problems.
\begin{corollary}
  Let $G: \bbRm \to \bbRn$ be Lipschitz continuous with constant $L$ and $\mathcal{C}^1$.
  Denote the feasibility set $\mathcal{Z} = \{\xbar \in \bbRm~|~G(\xbar) = 0\}$.
  Assume that the projection operator onto $\mathcal{Z}$ is Lipschitz continuous with
  constant $L_{\mathcal{Z}}$ and that $\mathcal{Z}$ is compact.
  Furthermore, assume that there is $\Omega > 0$ such that $\nabla G(x)$ has full rank
  and $\|{\nabla G(x)}^+\| \le \Omega$.
  Then any sequence
  ${\{x^k\}}_{k \in \mathbb{N}}$ with $x^0$ near $\mathcal{Z}$ and generated
  by~\eqref{eq:newton underdetermined iteration} with $\mathcal{H} G = \nabla G$, satisfies
  \begin{equation*}
    \forall k \in \bbN,\quad \dist(x^{k+1}, \mathcal{Z}) \le c^k\dist(x^k, \mathcal{Z}),
  \end{equation*}
  where ${\{c^k\}}_{k \in \bbN}$ is a positive real sequence convergent to $0$.
\end{corollary}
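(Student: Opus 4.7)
The plan is to reduce everything to Theorem~\ref{thm:superlinear convergence of underdetemined newton} applied with the single-valued Newton differential $\mathcal{H}G(x) = \{\nabla G(x)\}$. Most hypotheses of that theorem are inherited verbatim from the corollary: Lipschitz continuity of $G$ with constant $L$, Lipschitz continuity of $\proj_{\mathcal{Z}}$, the full rank of $\nabla G(x)$, and the uniform bound $\|\nabla G(x)^{+}\| \le \Omega$ are all directly assumed. The substantive work consists only in verifying (i) uniform Newton differentiability of $G$ on $\mathcal{Z}$ and (ii) geometric compatibility of $\nabla G$ with $\mathcal{Z}$.

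For (i), I would invoke Proposition~\ref{thm:smoothness-implies-newton}, which yields uniform Newton differentiability of a $\mathcal{C}^1$ function on any closed ball $\bbB_\rho[x_0] \subseteq \bbRm$. Using compactness of $\mathcal{Z}$, I cover $\mathcal{Z}$ by finitely many such balls; for a given target $\varepsilon > 0$, taking the minimum of the finitely many associated $\delta$'s produces a single $\delta$ that works on an open neighborhood of $\mathcal{Z}$, which is exactly what Definition~\ref{def:unif-newton-diff} demands.

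For (ii), I would invoke the theorem immediately preceding this corollary, which shows that a single-valued Newton differential satisfying the standing bounds is geometrically compatible with $\mathcal{Z}$ whenever it is uniformly continuous. Since $G \in \mathcal{C}^1$, the map $\nabla G$ is continuous; choosing any bounded open neighborhood $V$ of the compact set $\mathcal{Z}$, the Heine-Cantor theorem applied on the compact closure $\overline{V}$ gives uniform continuity of $\nabla G$ on $V$. Because geometric compatibility is only tested on points $x$ with $\dist(x, \mathcal{Z})$ small, this localized uniform continuity is enough to trigger the preceding theorem.

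The main ``obstacle'' is thus not really an obstacle but a bookkeeping step: promoting the pointwise $\mathcal{C}^1$ condition to the uniform quantifier order demanded by the abstract theorem, which is precisely what compactness of $\mathcal{Z}$ buys us. Once (i) and (ii) are in place, Theorem~\ref{thm:superlinear convergence of underdetemined newton} applies verbatim and delivers the sequence $\{c^k\}_{k \in \bbN}$ with $c^k \to 0$ and $\dist(x^{k+1}, \mathcal{Z}) \le c^k \dist(x^k, \mathcal{Z})$, completing the proof.
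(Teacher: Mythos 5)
Your proposal is correct and matches the paper's intended derivation exactly: the corollary is stated without an explicit proof, being the direct combination of Proposition~\ref{thm:smoothness-implies-newton} (uniform Newton differentiability of $\mathcal{C}^1$ maps), the immediately preceding theorem (uniform continuity of a single-valued Newton differential implies geometric compatibility), and Theorem~\ref{thm:superlinear convergence of underdetemined newton}. Your compactness bookkeeping --- covering $\mathcal{Z}$ by finitely many balls to globalize the $\delta$, and applying Heine--Cantor on a compact neighborhood of $\mathcal{Z}$ to get uniform continuity of $\nabla G$ --- is exactly the localization the paper implicitly relies on, spelled out more carefully than the paper itself does.
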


\begin{corollary}
  Let $G: \bbRm \to \bbRn$ be Lipschitz continuous with constant $L$ and uniformly
  semi-smooth* on
  the feasibility set $\mathcal{Z} = \{\xbar \in \bbRm~|~G(\xbar) = 0\}$.
  Assume that the projection operator onto $\mathcal{Z}$ is Lipschitz continuous with
  constant $L_{\mathcal{Z}}$ and that $\partial^C G$ is single valued.
  Furthermore, assume that there is $\Omega > 0$ such that $\partial^C G(x)$ has full rank
  and $\|{\partial^C G(x)}^+\| \le \Omega$.
  Then any sequence
  ${\{x^k\}}_{k \in \mathbb{N}}$ with $x^0$ near $\mathcal{Z}$ and generated
  by~\eqref{eq:newton underdetermined iteration} with $\mathcal{H} G = \partial^C G$, satisfies
  \begin{equation*}
    \forall k \in \bbN,\quad \dist(x^{k+1}, \mathcal{Z}) \le c^k\dist(x^k, \mathcal{Z}),
  \end{equation*}
  where ${\{c^k\}}_{k \in \bbN}$ is a positive real sequence convergent to $0$.
\end{corollary}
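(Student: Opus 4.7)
The plan is to derive this statement as a direct consequence of Theorem~\ref{thm:superlinear convergence of underdetemined newton} applied with the Newton differential $\mathcal{H}G = \partial^C G$. Since $\partial^C G$ is assumed single-valued, it coincides with $\nabla G$ on the full-measure set where the Fréchet derivative exists, so the setting is analogous to the smooth corollary just proved, but with Proposition~\ref{thm:semi-smoothness-implies-newton} playing the role of Proposition~\ref{thm:smoothness-implies-newton}.

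First, I would verify the uniform Newton differentiability hypothesis. Proposition~\ref{thm:semi-smoothness-implies-newton} states that uniform semi-smoothness* on $\mathcal{Z}$ implies uniform Newton differentiability on $\mathcal{Z}$ with Newton differential equal to the single-valued Clarke Jacobian, so this hypothesis of the master theorem is immediate from the standing assumption.

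Next, I would establish geometric compatibility of $\partial^C G$ with $\mathcal{Z}$ by invoking the preceding theorem in this section, which says that a single-valued Newton differential with uniformly bounded pseudoinverse and uniform continuity is automatically geometrically compatible. The single-valuedness and the bound $\|\partial^C G(x)^+\| \le \Omega$ are in the hypotheses. For uniform continuity, I would use that the Clarke Jacobian is always upper semicontinuous as a set-valued map, and under the single-valuedness assumption this forces continuity of $\partial^C G$; on a bounded neighborhood of $\mathcal{Z}$ (which is where the iterates start and remain, per the self-mapping part of the master theorem) continuity upgrades to uniform continuity, and that is all the previous theorem really uses in its proof.

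With both the uniform Newton differentiability and geometric compatibility of $\partial^C G$ established, and the Lipschitz continuity of $G$, the Lipschitz continuity of $\proj_\mathcal{Z}$, the full-rank condition, and the pseudoinverse bound all assumed directly, every hypothesis of Theorem~\ref{thm:superlinear convergence of underdetemined newton} is verified. Its conclusion gives exactly the sequence $\{c^k\}_{k \in \bbN}$ with $c^k \to 0$ and the bound $\dist(x^{k+1}, \mathcal{Z}) \le c^k\, \dist(x^k, \mathcal{Z})$. The main point requiring care is the uniform continuity step for $\partial^C G$; if one did not want to rely on the localization-to-a-neighborhood argument, one could strengthen the statement by also assuming $\mathcal{Z}$ compact, mirroring the smooth corollary above, and obtain the same conclusion without additional work.
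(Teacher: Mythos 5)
Your proposal is correct and follows exactly the route the paper intends: combine Proposition~\ref{thm:semi-smoothness-implies-newton} for uniform Newton differentiability, the preceding theorem of this section for geometric compatibility of the single-valued Clarke Jacobian, and then invoke Theorem~\ref{thm:superlinear convergence of underdetemined newton}. You are also right to flag the uniform-continuity step for $\partial^C G$ as the delicate point --- the paper's statement omits the compactness of $\mathcal{Z}$ (present in the smooth corollary) that would make that step airtight, and your suggested fix of adding it is the natural repair.
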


\subsection{Numerical Experiments}
Our method resembles~\cite[Algorithm 1]{PolTre17Solv} and as such a
direct comparison is necessary. The key difference between that method
and our work is that we do not employ a globalization strategy, but can
handle non-smooth objectives. The test numerical problem from~\cite{PolTre17Solv}
is finding the roots of
\begin{equation*}
  G(x)_i = \varphi((Cx)_i - b_i) - y_i,
\end{equation*}
where $C \in \bbRnxm$, $b,y \in \bbRn$ are random, $c_i$ is the $i$-th component of the vector $c$
and $\varphi(t) = t/(1 + e^{-|t|})$.
As expected, because of the smoothness of the problem, both algorithms behave
nearly identically, attaining quadratic convergence.

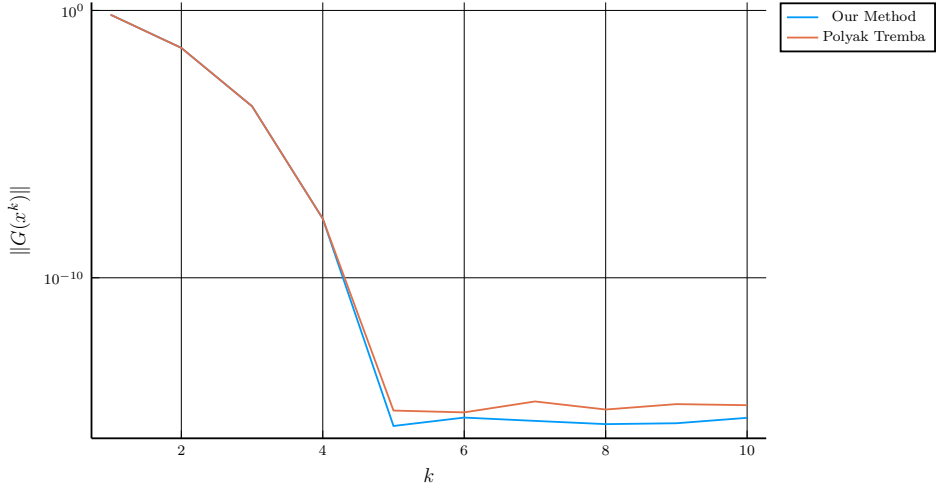
\begin{figure}[ht!]
  \begin{adjustbox}{width=\textwidth}%
    \begin{tikzpicture}[/tikz/background rectangle/.style={fill={rgb,1:red,1.0;green,1.0;blue,1.0}, fill opacity={1.0}, draw opacity={1.0}}, show background rectangle]
      \begin{axis}[point meta max={nan}, point meta min={nan}, legend cell align={left}, legend columns={1}, title={}, title style={at={{(0.5,1)}}, anchor={south}, font={{\fontsize{14 pt}{18.2 pt}\selectfont}}, color={rgb,1:red,0.0;green,0.0;blue,0.0}, draw opacity={1.0}, rotate={0.0}, align={center}}, legend style={color={rgb,1:red,0.0;green,0.0;blue,0.0}, draw opacity={1.0}, line width={1}, solid, fill={rgb,1:red,1.0;green,1.0;blue,1.0}, fill opacity={1.0}, text opacity={1.0}, font={{\fontsize{8 pt}{10.4 pt}\selectfont}}, text={rgb,1:red,0.0;green,0.0;blue,0.0}, cells={anchor={center}}, at={(1.02, 1)}, anchor={north west}}, axis background/.style={fill={rgb,1:red,1.0;green,1.0;blue,1.0}, opacity={1.0}}, anchor={north west}, xshift={1.0mm}, yshift={-1.0mm}, width={145.4mm}, height={99.6mm}, scaled x ticks={false}, xlabel={$k$}, x tick style={color={rgb,1:red,0.0;green,0.0;blue,0.0}, opacity={1.0}}, x tick label style={color={rgb,1:red,0.0;green,0.0;blue,0.0}, opacity={1.0}, rotate={0}}, xlabel style={at={(ticklabel cs:0.5)}, anchor=near ticklabel, at={{(ticklabel cs:0.5)}}, anchor={near ticklabel}, font={{\fontsize{11 pt}{14.3 pt}\selectfont}}, color={rgb,1:red,0.0;green,0.0;blue,0.0}, draw opacity={1.0}, rotate={0.0}}, xmajorgrids={true}, xmin={0.7299999999999995}, xmax={10.27}, xticklabels={{$2$,$4$,$6$,$8$,$10$}}, xtick={{2.0,4.0,6.0,8.0,10.0}}, xtick align={inside}, xticklabel style={font={{\fontsize{8 pt}{10.4 pt}\selectfont}}, color={rgb,1:red,0.0;green,0.0;blue,0.0}, draw opacity={1.0}, rotate={0.0}}, x grid style={color={rgb,1:red,0.0;green,0.0;blue,0.0}, draw opacity={0.1}, line width={0.5}, solid}, axis x line*={left}, x axis line style={color={rgb,1:red,0.0;green,0.0;blue,0.0}, draw opacity={1.0}, line width={1}, solid}, scaled y ticks={false}, ylabel={$\|G(x^k)\|$}, y tick style={color={rgb,1:red,0.0;green,0.0;blue,0.0}, opacity={1.0}}, y tick label style={color={rgb,1:red,0.0;green,0.0;blue,0.0}, opacity={1.0}, rotate={0}}, ylabel style={at={(ticklabel cs:0.5)}, anchor=near ticklabel, at={{(ticklabel cs:0.5)}}, anchor={near ticklabel}, font={{\fontsize{11 pt}{14.3 pt}\selectfont}}, color={rgb,1:red,0.0;green,0.0;blue,0.0}, draw opacity={1.0}, rotate={0.0}}, ymode={log}, log basis y={10}, ymajorgrids={true}, ymin={9.890925927657651e-17}, ymax={1.9970627596079846}, yticklabels={{$10^{-10}$,$10^{0}$}}, ytick={{1.0e-10,1.0}}, ytick align={inside}, yticklabel style={font={{\fontsize{8 pt}{10.4 pt}\selectfont}}, color={rgb,1:red,0.0;green,0.0;blue,0.0}, draw opacity={1.0}, rotate={0.0}}, y grid style={color={rgb,1:red,0.0;green,0.0;blue,0.0}, draw opacity={0.1}, line width={0.5}, solid}, axis y line*={left}, y axis line style={color={rgb,1:red,0.0;green,0.0;blue,0.0}, draw opacity={1.0}, line width={1}, solid}, colorbar={false}]
        \addplot[color={rgb,1:red,0.0;green,0.6056;blue,0.9787}, name path={1}, draw opacity={1.0}, line width={1}, solid]
        table[row sep={\\}]
        {
          \\
          1.0  0.6901207235042334  \\
          2.0  0.03969684547634147  \\
          3.0  0.0002606393409578396  \\
          4.0  1.6260250118805842e-8  \\
          5.0  2.8622238335152586e-16  \\
          6.0  5.902734891075384e-16  \\
          7.0  4.43335039951723e-16  \\
          8.0  3.355394223065532e-16  \\
          9.0  3.61868560698565e-16  \\
          10.0  5.775375822671982e-16  \\
        }
        ;
        \addlegendentry {Our Method}
        \addplot[color={rgb,1:red,0.8889;green,0.4356;blue,0.2781}, name path={2}, draw opacity={1.0}, line width={1}, solid]
        table[row sep={\\}]
        {
          \\
          1.0  0.6901207235042334  \\
          2.0  0.039696845476343195  \\
          3.0  0.0002606393409583759  \\
          4.0  1.626024988706782e-8  \\
          5.0  1.079736436130962e-15  \\
          6.0  9.234292161343797e-16  \\
          7.0  2.3651450288898032e-15  \\
          8.0  1.1808872524523116e-15  \\
          9.0  1.8813912454502658e-15  \\
          10.0  1.7149632713319975e-15  \\
        }
        ;
        \addlegendentry {Polyak Tremba}
      \end{axis}
    \end{tikzpicture}
  \end{adjustbox}
  \caption{Objective value, $\|G(x^k)\|$ over $k$ for our Algorithm
    and~\cite[Algorithm 1]{PolTre17Solv}, observing nearly identical behavior due to the smoothness
  of the problem}
\end{figure}

Another Newton-type method for under-determined systems is presented
in~\cite{Kub11Inte}. A direct comparison between our method and their method
is impossible due to the fact that the latter employs interval arithmetic in
order to approximate the entire set of solutions and not just to find one
possible solution. Nonetheless, the problems tackled there can serve as an
useful testing ground for our method. These problems come from interesting
real life applications, such as inverse kinematics of a robot. Consider the
equation systems $G(x) = 0$, where
\begin{equation}\label{eq:inter_p1}\tag{$\mathcal{P}_1$}
  G(x) = \begin{bmatrix}(x_{1}^2 + x_{2}^2 - 4)  (x_{1}^2 + x_{2}^2 - 1)\end{bmatrix}
\end{equation}
\begin{equation}\label{eq:inter_p2}\tag{$\mathcal{P}_2$}
  G(x) = \begin{bmatrix}x_{1}^2 + x_{2}^2 - x_{3}\\  x_{1}^2 + x_{2}^2 - 1.1x_{3}\end{bmatrix}
\end{equation}
\begin{equation}\label{eq:inter_p3}\tag{$\mathcal{P}_3$}
  G(x) = \begin{bmatrix}
    x_{1}^2 + x_{2}^2 - 1\\
    x_{3}^2 + x_{4}^2 - 1\\
    x_{5}^2 + x_{6}^2 - 1\\
    x_{7}^2 + x_{8}^2 - 1\\
    0.004731x_{1}x_{2} - 0.3578x_{2}x_{3} - 0.1238x_{1} - 0.001637x_{2} - 0.9338x_{4} + x_{7}\\
    0.2238x_{1}x_{3} + 0.7623x_{2}x_{3} + 0.2638x_{1} - 0.07745x_{2} - 0.6734x_{4} - 0.6022\\
    x_{6}x_{8} + 0.3578x_{1} + 0.004731x_{2}\\
  \end{bmatrix}
\end{equation}
\begin{equation}\label{eq:inter_p4}\tag{$\mathcal{P}_4$}
  G(x) = \begin{bmatrix}
    -3.933x_{1} + 0.107x_{2} + 0.126x_{3} - 9.99x_{5} - 45.83x_{7} + \\
    - 7.64x_{8} - 0.727x_{2}x_{3} + 8.39x_{3}x_{4} - 684.4x_{4}x_{5} + 63.5x_{4}x_{7}\\
    -0.987x_{2} - 22.95x_{4} - 28.37x_{6} + 0.949x_{1}x_{3} + 0.173x_{1}x_{5}\\
    0.002x_{1} - 0.235x_{3} + 5.67x_{5} + 0.921x_{7} - 6.51x_{8} - 0.716x_{1}x_{2} + \\
    - 1.578x_{1}x_{4} + 1.132x_{4}x_{7}\\
    x_{1} - x_{4} - 0.168x_{6} - x_{1}x_{2}\\
    -x_{3} - 0.196x_{5} - 0.0071x_{7} + x_{1}x_{4}\\
  \end{bmatrix}
\end{equation}
To treat all the examples from~\cite{Kub11Inte}, we also add problem~($\mathcal{P}_{3b}$)
by removing the last equation from~\eqref{eq:inter_p3} and~($\mathcal{P}_{4b}$)
by setting $x_6 = 0.1$ and $x_8 = 0$ in~\eqref{eq:inter_p4}.

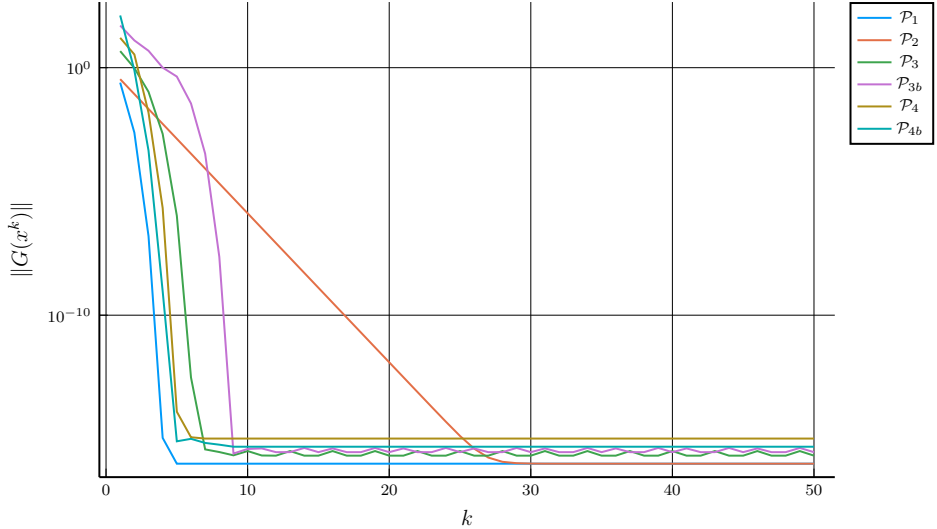
\begin{figure}[ht!]
  \begin{adjustbox}{width=\textwidth}%
    \begin{tikzpicture}[/tikz/background rectangle/.style={fill={rgb,1:red,1.0;green,1.0;blue,1.0}, fill opacity={1.0}, draw opacity={1.0}}, show background rectangle]
      \begin{axis}[point meta max={nan}, point meta min={nan}, legend cell align={left}, legend columns={1}, title={}, title style={at={{(0.5,1)}}, anchor={south}, font={{\fontsize{14 pt}{18.2 pt}\selectfont}}, color={rgb,1:red,0.0;green,0.0;blue,0.0}, draw opacity={1.0}, rotate={0.0}, align={center}}, legend style={color={rgb,1:red,0.0;green,0.0;blue,0.0}, draw opacity={1.0}, line width={1}, solid, fill={rgb,1:red,1.0;green,1.0;blue,1.0}, fill opacity={1.0}, text opacity={1.0}, font={{\fontsize{8 pt}{10.4 pt}\selectfont}}, text={rgb,1:red,0.0;green,0.0;blue,0.0}, cells={anchor={center}}, at={(1.02, 1)}, anchor={north west}}, axis background/.style={fill={rgb,1:red,1.0;green,1.0;blue,1.0}, opacity={1.0}}, anchor={north west}, xshift={1.0mm}, yshift={-1.0mm}, width={145.4mm}, height={99.6mm}, scaled x ticks={false}, xlabel={$k$}, x tick style={color={rgb,1:red,0.0;green,0.0;blue,0.0}, opacity={1.0}}, x tick label style={color={rgb,1:red,0.0;green,0.0;blue,0.0}, opacity={1.0}, rotate={0}}, xlabel style={at={(ticklabel cs:0.5)}, anchor=near ticklabel, at={{(ticklabel cs:0.5)}}, anchor={near ticklabel}, font={{\fontsize{11 pt}{14.3 pt}\selectfont}}, color={rgb,1:red,0.0;green,0.0;blue,0.0}, draw opacity={1.0}, rotate={0.0}}, xmajorgrids={true}, xmin={-0.4700000000000024}, xmax={51.47}, xticklabels={{$0$,$10$,$20$,$30$,$40$,$50$}}, xtick={{0.0,10.0,20.0,30.0,40.0,50.0}}, xtick align={inside}, xticklabel style={font={{\fontsize{8 pt}{10.4 pt}\selectfont}}, color={rgb,1:red,0.0;green,0.0;blue,0.0}, draw opacity={1.0}, rotate={0.0}}, x grid style={color={rgb,1:red,0.0;green,0.0;blue,0.0}, draw opacity={0.1}, line width={0.5}, solid}, axis x line*={left}, x axis line style={color={rgb,1:red,0.0;green,0.0;blue,0.0}, draw opacity={1.0}, line width={1}, solid}, scaled y ticks={false}, ylabel={$\|G(x^k)\|$}, y tick style={color={rgb,1:red,0.0;green,0.0;blue,0.0}, opacity={1.0}}, y tick label style={color={rgb,1:red,0.0;green,0.0;blue,0.0}, opacity={1.0}, rotate={0}}, ylabel style={at={(ticklabel cs:0.5)}, anchor=near ticklabel, at={{(ticklabel cs:0.5)}}, anchor={near ticklabel}, font={{\fontsize{11 pt}{14.3 pt}\selectfont}}, color={rgb,1:red,0.0;green,0.0;blue,0.0}, draw opacity={1.0}, rotate={0.0}}, ymode={log}, log basis y={10}, ymajorgrids={true}, ymin={2.8622350426037563e-17}, ymax={449.9023082782237}, yticklabels={{$10^{-10}$,$10^{0}$}}, ytick={{1.0e-10,1.0}}, ytick align={inside}, yticklabel style={font={{\fontsize{8 pt}{10.4 pt}\selectfont}}, color={rgb,1:red,0.0;green,0.0;blue,0.0}, draw opacity={1.0}, rotate={0.0}}, y grid style={color={rgb,1:red,0.0;green,0.0;blue,0.0}, draw opacity={0.1}, line width={0.5}, solid}, axis y line*={left}, y axis line style={color={rgb,1:red,0.0;green,0.0;blue,0.0}, draw opacity={1.0}, line width={1}, solid}, colorbar={false}]
        \addplot[color={rgb,1:red,0.0;green,0.6056;blue,0.9787}, name path={1}, draw opacity={1.0}, line width={1}, solid]
        table[row sep={\\}]
        {
          \\
          1.0  0.24894843978818162  \\
          2.0  0.002386675527274292  \\
          3.0  1.5768319170212898e-7  \\
          4.0  1.0992007221626411e-15  \\
          5.0  1.0e-16  \\
          6.0  1.0e-16  \\
          7.0  1.0e-16  \\
          8.0  1.0e-16  \\
          9.0  1.0e-16  \\
          10.0  1.0e-16  \\
          11.0  1.0e-16  \\
          12.0  1.0e-16  \\
          13.0  1.0e-16  \\
          14.0  1.0e-16  \\
          15.0  1.0e-16  \\
          16.0  1.0e-16  \\
          17.0  1.0e-16  \\
          18.0  1.0e-16  \\
          19.0  1.0e-16  \\
          20.0  1.0e-16  \\
          21.0  1.0e-16  \\
          22.0  1.0e-16  \\
          23.0  1.0e-16  \\
          24.0  1.0e-16  \\
          25.0  1.0e-16  \\
          26.0  1.0e-16  \\
          27.0  1.0e-16  \\
          28.0  1.0e-16  \\
          29.0  1.0e-16  \\
          30.0  1.0e-16  \\
          31.0  1.0e-16  \\
          32.0  1.0e-16  \\
          33.0  1.0e-16  \\
          34.0  1.0e-16  \\
          35.0  1.0e-16  \\
          36.0  1.0e-16  \\
          37.0  1.0e-16  \\
          38.0  1.0e-16  \\
          39.0  1.0e-16  \\
          40.0  1.0e-16  \\
          41.0  1.0e-16  \\
          42.0  1.0e-16  \\
          43.0  1.0e-16  \\
          44.0  1.0e-16  \\
          45.0  1.0e-16  \\
          46.0  1.0e-16  \\
          47.0  1.0e-16  \\
          48.0  1.0e-16  \\
          49.0  1.0e-16  \\
          50.0  1.0e-16  \\
        }
        ;
        \addlegendentry {$\mathcal{P}_1$}
        \addplot[color={rgb,1:red,0.8889;green,0.4356;blue,0.2781}, name path={2}, draw opacity={1.0}, line width={1}, solid]
        table[row sep={\\}]
        {
          \\
          1.0  0.343353941770593  \\
          2.0  0.08583848544264874  \\
          3.0  0.0214596213606619  \\
          4.0  0.005364905340165593  \\
          5.0  0.0013412263350414384  \\
          6.0  0.0003353065837604432  \\
          7.0  8.382664594020098e-5  \\
          8.0  2.095666148512315e-5  \\
          9.0  5.239165371352619e-6  \\
          10.0  1.3097913429128295e-6  \\
          11.0  3.2744783580347784e-7  \\
          12.0  8.186195902631083e-8  \\
          13.0  2.0465489831291e-8  \\
          14.0  5.116372532850309e-9  \\
          15.0  1.2790932082521235e-9  \\
          16.0  3.1977337705343246e-10  \\
          17.0  7.9943419271287e-11  \\
          18.0  1.9985929819400975e-11  \\
          19.0  4.996557446539602e-12  \\
          20.0  1.2492143645391258e-12  \\
          21.0  3.123785923672327e-13  \\
          22.0  7.816964731771272e-14  \\
          23.0  1.961741183315014e-14  \\
          24.0  4.979353008083941e-15  \\
          25.0  1.3198382868925896e-15  \\
          26.0  4.049595753032702e-16  \\
          27.0  1.7623987494369762e-16  \\
          28.0  1.1905997693327575e-16  \\
          29.0  1.0476499086892026e-16  \\
          30.0  1.0119124751019432e-16  \\
          31.0  1.0029781174939854e-16  \\
          32.0  1.0007445299172065e-16  \\
          33.0  1.0001861331093454e-16  \\
          34.0  1.0000465328818776e-16  \\
          35.0  1.0000116334221362e-16  \\
          36.0  1.0000029083381309e-16  \\
          37.0  1.0000007270457936e-16  \\
          38.0  1.0000001817700206e-16  \\
          39.0  1.0000000454458604e-16  \\
          40.0  1.0000000113651602e-16  \\
          41.0  1.0000000028373687e-16  \\
          42.0  1.00000000071155e-16  \\
          43.0  1.0000000001774961e-16  \\
          44.0  1.000000000044353e-16  \\
          45.0  1.0000000000109485e-16  \\
          46.0  1.0000000000027674e-16  \\
          47.0  1.0000000000007395e-16  \\
          48.0  1.0000000000001788e-16  \\
          49.0  1.0000000000000077e-16  \\
          50.0  1.0000000000000077e-16  \\
        }
        ;
        \addlegendentry {$\mathcal{P}_2$}
        \addplot[color={rgb,1:red,0.2422;green,0.6433;blue,0.3044}, name path={3}, draw opacity={1.0}, line width={1}, solid]
        table[row sep={\\}]
        {
          \\
          1.0  4.669770340201882  \\
          2.0  0.9656127261933198  \\
          3.0  0.1058772022233598  \\
          4.0  0.0021150875035540603  \\
          5.0  1.0446109376871488e-6  \\
          6.0  3.048145310504078e-13  \\
          7.0  3.7833528704240246e-16  \\
          8.0  2.9540102647498287e-16  \\
          9.0  2.1775693440128312e-16  \\
          10.0  3.242025235105681e-16  \\
          11.0  2.1443916996305593e-16  \\
          12.0  2.1102230246251565e-16  \\
          13.0  3.237726045655905e-16  \\
          14.0  2.1123893155135926e-16  \\
          15.0  2.1102230246251565e-16  \\
          16.0  3.237726045655905e-16  \\
          17.0  2.1123893155135926e-16  \\
          18.0  2.1102230246251565e-16  \\
          19.0  3.237726045655905e-16  \\
          20.0  2.1123893155135926e-16  \\
          21.0  2.1102230246251565e-16  \\
          22.0  3.237726045655905e-16  \\
          23.0  2.1123893155135926e-16  \\
          24.0  2.1102230246251565e-16  \\
          25.0  3.237726045655905e-16  \\
          26.0  2.1123893155135926e-16  \\
          27.0  2.1102230246251565e-16  \\
          28.0  3.237726045655905e-16  \\
          29.0  2.1123893155135926e-16  \\
          30.0  2.1102230246251565e-16  \\
          31.0  3.237726045655905e-16  \\
          32.0  2.1123893155135926e-16  \\
          33.0  2.1102230246251565e-16  \\
          34.0  3.237726045655905e-16  \\
          35.0  2.1123893155135926e-16  \\
          36.0  2.1102230246251565e-16  \\
          37.0  3.237726045655905e-16  \\
          38.0  2.1123893155135926e-16  \\
          39.0  2.1102230246251565e-16  \\
          40.0  3.237726045655905e-16  \\
          41.0  2.1123893155135926e-16  \\
          42.0  2.1102230246251565e-16  \\
          43.0  3.237726045655905e-16  \\
          44.0  2.1123893155135926e-16  \\
          45.0  2.1102230246251565e-16  \\
          46.0  3.237726045655905e-16  \\
          47.0  2.1123893155135926e-16  \\
          48.0  2.1102230246251565e-16  \\
          49.0  3.237726045655905e-16  \\
          50.0  2.1123893155135926e-16  \\
        }
        ;
        \addlegendentry {$\mathcal{P}_3$}
        \addplot[color={rgb,1:red,0.7644;green,0.4441;blue,0.8243}, name path={4}, draw opacity={1.0}, line width={1}, solid]
        table[row sep={\\}]
        {
          \\
          1.0  50.38247045939795  \\
          2.0  12.652870995545156  \\
          3.0  4.8186907108413815  \\
          4.0  1.0083130607193838  \\
          5.0  0.4381917579911258  \\
          6.0  0.035899939651402486  \\
          7.0  0.0003362166451939486  \\
          8.0  2.2768842559400205e-8  \\
          9.0  2.575473721604871e-16  \\
          10.0  3.9373868288826497e-16  \\
          11.0  4.1401968961995204e-16  \\
          12.0  2.922982247672075e-16  \\
          13.0  2.9286076419884632e-16  \\
          14.0  4.3332092041038047e-16  \\
          15.0  2.922982247672075e-16  \\
          16.0  4.142879004020754e-16  \\
          17.0  2.922982247672075e-16  \\
          18.0  2.9286076419884632e-16  \\
          19.0  4.3332092041038047e-16  \\
          20.0  2.922982247672075e-16  \\
          21.0  4.142879004020754e-16  \\
          22.0  2.922982247672075e-16  \\
          23.0  2.9286076419884632e-16  \\
          24.0  4.3332092041038047e-16  \\
          25.0  2.922982247672075e-16  \\
          26.0  4.142879004020754e-16  \\
          27.0  2.922982247672075e-16  \\
          28.0  2.9286076419884632e-16  \\
          29.0  4.3332092041038047e-16  \\
          30.0  2.922982247672075e-16  \\
          31.0  4.142879004020754e-16  \\
          32.0  2.922982247672075e-16  \\
          33.0  2.9286076419884632e-16  \\
          34.0  4.3332092041038047e-16  \\
          35.0  2.922982247672075e-16  \\
          36.0  4.142879004020754e-16  \\
          37.0  2.922982247672075e-16  \\
          38.0  2.9286076419884632e-16  \\
          39.0  4.3332092041038047e-16  \\
          40.0  2.922982247672075e-16  \\
          41.0  4.142879004020754e-16  \\
          42.0  2.922982247672075e-16  \\
          43.0  2.9286076419884632e-16  \\
          44.0  4.3332092041038047e-16  \\
          45.0  2.922982247672075e-16  \\
          46.0  4.142879004020754e-16  \\
          47.0  2.922982247672075e-16  \\
          48.0  2.9286076419884632e-16  \\
          49.0  4.3332092041038047e-16  \\
          50.0  2.922982247672075e-16  \\
        }
        ;
        \addlegendentry {$\mathcal{P}_{3b}$}
        \addplot[color={rgb,1:red,0.6755;green,0.5557;blue,0.0942}, name path={5}, draw opacity={1.0}, line width={1}, solid]
        table[row sep={\\}]
        {
          \\
          1.0  16.06866296030941  \\
          2.0  3.4262600346140943  \\
          3.0  0.015284263354648244  \\
          4.0  2.1417432406797383e-6  \\
          5.0  1.2538402326696304e-14  \\
          6.0  1.1699071403104304e-15  \\
          7.0  1.0354523486918135e-15  \\
          8.0  1.0354523486918135e-15  \\
          9.0  1.0354523486918135e-15  \\
          10.0  1.0354523486918135e-15  \\
          11.0  1.0354523486918135e-15  \\
          12.0  1.0354523486918135e-15  \\
          13.0  1.0354523486918135e-15  \\
          14.0  1.0354523486918135e-15  \\
          15.0  1.0354523486918135e-15  \\
          16.0  1.0354523486918135e-15  \\
          17.0  1.0354523486918135e-15  \\
          18.0  1.0354523486918135e-15  \\
          19.0  1.0354523486918135e-15  \\
          20.0  1.0354523486918135e-15  \\
          21.0  1.0354523486918135e-15  \\
          22.0  1.0354523486918135e-15  \\
          23.0  1.0354523486918135e-15  \\
          24.0  1.0354523486918135e-15  \\
          25.0  1.0354523486918135e-15  \\
          26.0  1.0354523486918135e-15  \\
          27.0  1.0354523486918135e-15  \\
          28.0  1.0354523486918135e-15  \\
          29.0  1.0354523486918135e-15  \\
          30.0  1.0354523486918135e-15  \\
          31.0  1.0354523486918135e-15  \\
          32.0  1.0354523486918135e-15  \\
          33.0  1.0354523486918135e-15  \\
          34.0  1.0354523486918135e-15  \\
          35.0  1.0354523486918135e-15  \\
          36.0  1.0354523486918135e-15  \\
          37.0  1.0354523486918135e-15  \\
          38.0  1.0354523486918135e-15  \\
          39.0  1.0354523486918135e-15  \\
          40.0  1.0354523486918135e-15  \\
          41.0  1.0354523486918135e-15  \\
          42.0  1.0354523486918135e-15  \\
          43.0  1.0354523486918135e-15  \\
          44.0  1.0354523486918135e-15  \\
          45.0  1.0354523486918135e-15  \\
          46.0  1.0354523486918135e-15  \\
          47.0  1.0354523486918135e-15  \\
          48.0  1.0354523486918135e-15  \\
          49.0  1.0354523486918135e-15  \\
          50.0  1.0354523486918135e-15  \\
        }
        ;
        \addlegendentry {$\mathcal{P}_4$}
        \addplot[color={rgb,1:red,0.0;green,0.6658;blue,0.681}, name path={6}, draw opacity={1.0}, line width={1}, solid]
        table[row sep={\\}]
        {
          \\
          1.0  128.77261525022573  \\
          2.0  0.7911796569297432  \\
          3.0  0.0004486295601145351  \\
          4.0  9.274132759556218e-10  \\
          5.0  8.016077249960062e-16  \\
          6.0  1.0103001729651043e-15  \\
          7.0  6.897428396198435e-16  \\
          8.0  5.863022755030523e-16  \\
          9.0  4.847255317199981e-16  \\
          10.0  4.847255317199981e-16  \\
          11.0  4.847255317199981e-16  \\
          12.0  4.847255317199981e-16  \\
          13.0  4.847255317199981e-16  \\
          14.0  4.847255317199981e-16  \\
          15.0  4.847255317199981e-16  \\
          16.0  4.847255317199981e-16  \\
          17.0  4.847255317199981e-16  \\
          18.0  4.847255317199981e-16  \\
          19.0  4.847255317199981e-16  \\
          20.0  4.847255317199981e-16  \\
          21.0  4.847255317199981e-16  \\
          22.0  4.847255317199981e-16  \\
          23.0  4.847255317199981e-16  \\
          24.0  4.847255317199981e-16  \\
          25.0  4.847255317199981e-16  \\
          26.0  4.847255317199981e-16  \\
          27.0  4.847255317199981e-16  \\
          28.0  4.847255317199981e-16  \\
          29.0  4.847255317199981e-16  \\
          30.0  4.847255317199981e-16  \\
          31.0  4.847255317199981e-16  \\
          32.0  4.847255317199981e-16  \\
          33.0  4.847255317199981e-16  \\
          34.0  4.847255317199981e-16  \\
          35.0  4.847255317199981e-16  \\
          36.0  4.847255317199981e-16  \\
          37.0  4.847255317199981e-16  \\
          38.0  4.847255317199981e-16  \\
          39.0  4.847255317199981e-16  \\
          40.0  4.847255317199981e-16  \\
          41.0  4.847255317199981e-16  \\
          42.0  4.847255317199981e-16  \\
          43.0  4.847255317199981e-16  \\
          44.0  4.847255317199981e-16  \\
          45.0  4.847255317199981e-16  \\
          46.0  4.847255317199981e-16  \\
          47.0  4.847255317199981e-16  \\
          48.0  4.847255317199981e-16  \\
          49.0  4.847255317199981e-16  \\
          50.0  4.847255317199981e-16  \\
        }
        ;
        \addlegendentry {$\mathcal{P}_{4b}$}
      \end{axis}
    \end{tikzpicture}
  \end{adjustbox}
  \caption{Objective value, $\|G(x^k)\|$ over $k$ for out Algorithm applied
    to the problems from~\cite{Kub11Inte}, observing quadratic convergence for
  all instances, except for the infeasible problem}
\end{figure}
\begin{remark}
  It is interesting to note that Problem~\eqref{eq:inter_p2}
  is inconsistent and thus
  it violates the assumptions of our super-linear convergence result.
  Nonetheless we see a behavior that resembles linear convergence.
\end{remark}

Finally, as mentioned in the introduction, complementarity problems serve as
a very important source of non-smooth systems. For a complete study on
how such problems arise from constraints in mechanical systems, the
reader is invited to consult~\cite{Bro16Nons}.
In this work, we consider
a toy complementarity problem in order to showcase how our
method might behave for such problems. Our example can be formulated as
$G(x) = 0$, where
$G:\bbRn \times \bbRn \times \bbRn \to \bbRn \times \bbRn$ with
\begin{equation*}
  G(x, y, z) =
  \begin{bmatrix}
    Ax + b - y + z \\
    \min(1 - x, y)
  \end{bmatrix},
\end{equation*}
where $A \in \bbRnxn$ and $b \in \bbRn$ are random and the minimum is understood element wise.

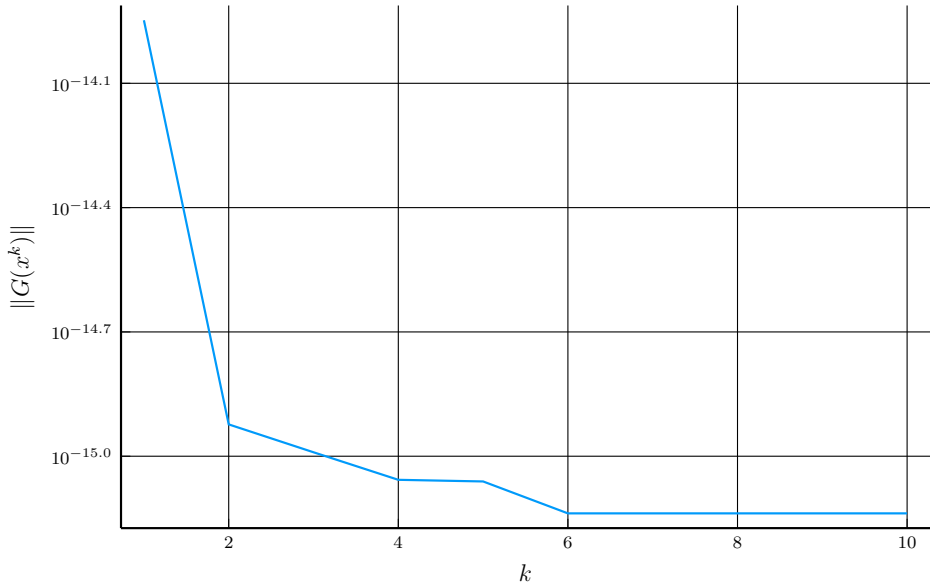
\begin{figure}[ht!]
  \begin{adjustbox}{width=\textwidth}%
    \begin{tikzpicture}[/tikz/background rectangle/.style={fill={rgb,1:red,1.0;green,1.0;blue,1.0}, fill opacity={1.0}, draw opacity={1.0}}, show background rectangle]
      \begin{axis}[point meta max={nan}, point meta min={nan}, legend cell align={left}, legend columns={1}, title={}, title style={at={{(0.5,1)}}, anchor={south}, font={{\fontsize{14 pt}{18.2 pt}\selectfont}}, color={rgb,1:red,0.0;green,0.0;blue,0.0}, draw opacity={1.0}, rotate={0.0}, align={center}}, legend style={color={rgb,1:red,0.0;green,0.0;blue,0.0}, draw opacity={1.0}, line width={1}, solid, fill={rgb,1:red,1.0;green,1.0;blue,1.0}, fill opacity={1.0}, text opacity={1.0}, font={{\fontsize{8 pt}{10.4 pt}\selectfont}}, text={rgb,1:red,0.0;green,0.0;blue,0.0}, cells={anchor={center}}, at={(1.02, 1)}, anchor={north west}}, axis background/.style={fill={rgb,1:red,1.0;green,1.0;blue,1.0}, opacity={1.0}}, anchor={north west}, xshift={1.0mm}, yshift={-1.0mm}, width={145.4mm}, height={99.6mm}, scaled x ticks={false}, xlabel={$k$}, x tick style={color={rgb,1:red,0.0;green,0.0;blue,0.0}, opacity={1.0}}, x tick label style={color={rgb,1:red,0.0;green,0.0;blue,0.0}, opacity={1.0}, rotate={0}}, xlabel style={at={(ticklabel cs:0.5)}, anchor=near ticklabel, at={{(ticklabel cs:0.5)}}, anchor={near ticklabel}, font={{\fontsize{11 pt}{14.3 pt}\selectfont}}, color={rgb,1:red,0.0;green,0.0;blue,0.0}, draw opacity={1.0}, rotate={0.0}}, xmajorgrids={true}, xmin={0.7299999999999995}, xmax={10.27}, xticklabels={{$2$,$4$,$6$,$8$,$10$}}, xtick={{2.0,4.0,6.0,8.0,10.0}}, xtick align={inside}, xticklabel style={font={{\fontsize{8 pt}{10.4 pt}\selectfont}}, color={rgb,1:red,0.0;green,0.0;blue,0.0}, draw opacity={1.0}, rotate={0.0}}, x grid style={color={rgb,1:red,0.0;green,0.0;blue,0.0}, draw opacity={0.1}, line width={0.5}, solid}, axis x line*={left}, x axis line style={color={rgb,1:red,0.0;green,0.0;blue,0.0}, draw opacity={1.0}, line width={1}, solid}, scaled y ticks={false}, ylabel={$\|G(x^k)\|$}, y tick style={color={rgb,1:red,0.0;green,0.0;blue,0.0}, opacity={1.0}}, y tick label style={color={rgb,1:red,0.0;green,0.0;blue,0.0}, opacity={1.0}, rotate={0}}, ylabel style={at={(ticklabel cs:0.5)}, anchor=near ticklabel, at={{(ticklabel cs:0.5)}}, anchor={near ticklabel}, font={{\fontsize{11 pt}{14.3 pt}\selectfont}}, color={rgb,1:red,0.0;green,0.0;blue,0.0}, draw opacity={1.0}, rotate={0.0}}, ymode={log}, log basis y={10}, ymajorgrids={true}, ymin={6.705885818553702e-16}, ymax={1.223940484848975e-14}, yticklabels={{$10^{-15.0}$,$10^{-14.7}$,$10^{-14.4}$,$10^{-14.1}$}}, ytick={{9.99999999999996e-16,1.9952623149688664e-15,3.981071705534953e-15,7.943282347242789e-15}}, ytick align={inside}, yticklabel style={font={{\fontsize{8 pt}{10.4 pt}\selectfont}}, color={rgb,1:red,0.0;green,0.0;blue,0.0}, draw opacity={1.0}, rotate={0.0}}, y grid style={color={rgb,1:red,0.0;green,0.0;blue,0.0}, draw opacity={0.1}, line width={0.5}, solid}, axis y line*={left}, y axis line style={color={rgb,1:red,0.0;green,0.0;blue,0.0}, draw opacity={1.0}, line width={1}, solid}, colorbar={false}]
        \addplot[color={rgb,1:red,0.0;green,0.6056;blue,0.9787}, name path={1}, draw opacity={1.0}, line width={1}, solid]
        table[row sep={\\}]
        {
          \\
          1.0  1.1273610168735491e-14  \\
          2.0  1.1934428697813141e-15  \\
          3.0  1.0222205069512407e-15  \\
          4.0  8.771561172376096e-16  \\
          5.0  8.691850745534256e-16  \\
          6.0  7.2803698347351e-16  \\
          7.0  7.2803698347351e-16  \\
          8.0  7.2803698347351e-16  \\
          9.0  7.2803698347351e-16  \\
          10.0  7.2803698347351e-16  \\
        }
        ;
      \end{axis}
    \end{tikzpicture}
  \end{adjustbox}
  \caption{Objective value, $\|G(x^k)\|$ over $k$ for our Algorithm applied to a toy
    model steaming from the non-smooth formulation of a complementarity constraint
    problem, observing just super-linear convergence and not quadratic convergence}
\end{figure}


All the implementations have been done in the Julia programming language and
are available at~\cite{Pin25AnewGit}.

\section{Conclusions and Further Works}
This paper shows that Newton's method can easily be extended to under-determined
problems, while the concept of Newton differentiability provides enough
regularity for super-linear convergence. A very important class of
problems fitting for our method is that of complementarity constraint problems
arising from mechanics, thus the algorithm developed in this work can provide
a valuable tool for engineers in order to develop over-parameterized models.
It is quite clear that our work can be extended by relaxing the Newton
differentiability assumption to weak Newton differentiability in order to
obtain linear convergence. Quasi-Newton methods can also fit in the framework
of Newton differentiability, and as such it would be interesting to see
Quasi-Newton approaches to solving non-smooth under-determined systems of
equations.

\section{Acknowledgments}
An earlier version of this work was part of the author's PhD thesis, completed at
the University of G\"ottingen under the supervision of D. Russell Luke. The author
would like to thank D. Russell Luke for his guidance and advice during the
undertaking of the PhD. The author would also like to thank his
postdoc mentor, Sorin-Mihai Grad.

\printbibliography{}

\end{document}